\newcommand{\N}{{\mathbb{N}}}
\newcommand{\R}{{\mathbb{R}}}
\newcommand{\Q}{{\mathbb{Q}}}
\newtheorem{Theorem}{Theorem}
\newtheorem{Corollary}{Corollary}
\newtheorem{Definition}{Definition}
\newtheorem{Proposition}{Proposition}
\newtheorem{Remark}{Remark}
\newtheorem{lemma}{Lemma}
\newtheorem{example}{Example}
\def\eps{\varepsilon}
\begin{document}
\title{Heteroclinic cycles in Hopfield networks}
\author[1]{Pascal Chossat}
\author[2]{Maciej Krupa}
\affil[1]{\small Project Team  NeuroMathComp, INRIA-CNRS-UNS, 2004 route des lucioles, 06902 Valbonne, FRANCE} 
\affil[2]{\small Project Team MYCENAE,  INRIA-Rocquencourt, Domaine de Voluceau,
BP 105, 78153 Le Chesnay, France}

\maketitle 
\begin{abstract} 
\noindent Learning or memory formation are associated with the strengthening of the synaptic
connections between neurons according to a pattern reflected by the input. According to this
theory a retained memory sequence is associated to a dynamic pattern of the associated
neural circuit. In this work we consider a class of network neuron models, known as Hopfield
networks, with a learning rule which consists of transforming an information string
to a coupling pattern. Within this class of models we study dynamic patterns, known as robust heteroclinic
cycles, and establish a tight connection between their existence and the structure of the coupling.
\end{abstract}
{\bf keywords}: heteroclinic cycles, Hopfield networks, learning rule, network architecture. \\
{\bf AMS classification}: 34C37, 37N25, 68T05, 92B20.
\section{Introduction} A simplest example of a heteroclinic cycle is a sequence of saddle type equilibrium points 
joined in a circle by connecting orbits. Generically heteroclinic cycles are not robust under perturbations of the system
(changes of parameters), but for special classes of systems they may occur robustly, typically due to the presence of invariant hyperplanes.
Examples of special structures leading to robust heteroclinic cycles are symmetry, the existence
of invariant planes corresponding to extinction of some species in Lotka-Volterra systems or existence of synchrony subspaces in coupled cell systems.
 Heteroclinic networks
are a generalization of heteroclinic cycles to sets of equilibria with more complicated connection structure.
More generally, heteroclinic sets may consist of invariant sets connecting periodic
or more complicated saddle type dynamics. The study of heteroclinic cycles
was motivated by examples in fluid mechanics (systems with symmetry) \cite{chossatlauterbach} and in population biology 
\cite{hofbauersigmund}.
See \cite{kruparev} for an introduction to the subject.

More recently, Rabinovich and co-workers have proposed applications of robust heteroclinic cycles in neuroscience, see
\cite{rabinovichetal} for an early review. Among the contexts proposed in \cite{rabinovichetal} 
where heteroclinic dynamics could be relevant
were central pattern generators (CPGs) and memory formation.
These two applications were validated by some more detailed biological studies \cite{rabinovichneuron} \cite{rabinovichPRL}.
The CPGs are circuits controlling the  motoric function, and are known to support a variety of complex oscillations
corresponding to different movements of the body. As shown in this paper, by slightly changing the coupling
structure in the model one can obtain a variety of heteroclinic cycles and thereby complex periodic solutions.
The idea of the memory application is similar -- modifications of the coupling, arising from the action of
the input, lead to the occurrence of periodic orbits, existing near heteroclinic cycles whose properties
reflect the structure of the input.

The focus of this work is to study Hopfield networks, which are the simplest models of memory circuits, with the goal of investigating the presence of heteroclinic cycles. \\
Hopfield introduced thirty years ago \cite{hopfield} this model for learning sequences and associative memory in neural networks, in their simplest possible form. In the continuous time version, for each neuron $i$ in a network of $N$ neurons, the activity is modeled by the following equation (activity model): 
 \begin{equation}\label{eq-hopnet}
 \dot u_i=-u_i - \sum_{i=1}^N J_{ij} g(u_j)+I_i,\qquad i=1,\ldots N.
 \end{equation}
 where $u_j$ is the activity variable (membrane potential) of neuron $j$ and $I_i$ is a constant external input on neuron $i$. 
The function $g :\R\to (0, 1)$ is strictly increasing and invertible, for example a sigmoid. The quantity $v_j=g( u_j)$ is the firing rate of neuron $j$, that is the time rate of spikes which are emitted by the neuron. Classically the function $g(u)=(1+e^{-u})^{-1}$ is used. The coupling coefficients $J_{ij}$ define a $N\times N$ matrix $J$ called the {\em connectivity matrix}. A positive (resp. negative) coefficient corresponds to an inhibitory (resp. excitatory) input from $j$ to $i$. When $J=0$ all neurons have the same state of rest $u_i=I_i$. When coupling is switched on, other equilibria may exist depending on the coefficients $J_{ij}$. It is often assumed that $J$ is a symmetric matrix, implying that the dynamics of the network always converges to an equilibrium. Each equilibrium is defined by a sequence of values $(u_1,\dots,u_N)$ called a {\em pattern}. Depending upon the inputs $I_i$, one or another equilibrium will be reached, a process that is interpreted as retrieving a pattern which has been earlier memorized through a tuning of the coupling coefficients in the network (Hebbian rule). \\
The assumption that $J$ is symmetric is unnecessary to storing information, and besides experiments have shown that dynamical patterns are often present in neural circuits and seem to play an important role in various aspects, in particular generating periodic oscillations in CPG. In a recent work Chuan et al. \cite{chuanetal} developed a method of converting strings of information, in the form of sequences of vectors with entries $\pm 1$, into coupling matrices so the Hopfield network with the resulting coupling architecture would have {\em storing cycles}, i.e. periodic orbits carrying the information of the underlying sequence of vectors. The defining feature of a storing cycle associated to such sequence is that it  visits the vicinity of each of its vector, preserving their order in the sequence. 
In another study Chuan et al. \cite{chuansiads} used Hopf bifurcation analysis to find storing cycles. \\
A natural observation is that heteroclinic cycles between equilibria given by the elements of the sequence provide a natural approximation to storing cycles. However the simulations of  \cite{chuanetal} and \cite{chuansiads} gave no evidence of the existence of heteroclinic cycles.

In this work we show that  after a small modification the systems studied in \cite{chuanetal} support heteroclinic
cycles. Our approach draws on the work of Fukai and Tanaka \cite{fukaitanaka}, 
who observed that by  of replacing a non-differentiable term in the firing rate equations by a constant one obtains a  Lotka-Volterra
system, which supports robust heteroclinic cycles \cite{hofbauersigmund}. This approach was subsequently used by  \cite{rabinovichetal} and \cite{ashwinetal} in their
study robust heteroclinic cycles in firing rate models.
In this work we continue the approach of  \cite{fukaitanaka}, introducing some refinements to their approximation of the firing rate equations.
We point out that the original firing rate equations cannot support hateroclinic cycles due to the presence of non-smooth terms and introduce
two methods of regularizing the equations. 
When the systems studied in \cite{chuanetal} are modified using either of our approaches
heteroclinic cycles do exist and there is a direct correspondence  between the input string/vector sequence/coupling structure
and the resulting heteroclinic cycle. In this work we carry out a detailed study of this correspondence.
 \section{Hopfield networks}
 \subsection{Storing cycles and network architecture}
 System \eqref{eq-hopnet} is often transformed to the {\em firing rate formulation}, by letting the firing rates $x_i=g(u_i)$
 be the dependent variables. In this section we make the same choice of $g$ as the authors of \cite{chuanetal}, namely
 \begin{equation}\label{eq-defgchuanetal}
 g(u)=\tanh(\lambda u),\quad\mbox{$\lambda$ is a parameter controlling the steepness of $g$.} 
 \end{equation} 
 In Section \ref{sec-LVapprox}, where we review some of the work of \cite{ashwinetal}, \cite{rabinovichPRL} and \cite{fukaitanaka}, we make a brief switch to a different but equivalent choice of $g$ used by these authors.
 System \eqref{eq-hopnet} transformed to the firing rate variables with $g$ given by \eqref{eq-defgchuanetal} has the form
  \begin{equation}\label{eq-stocyc}
 \dot x_j=(1-x_j^2)\left( \lambda(J{\mathbf x})_j - f(x_j)\right ), {\mathbf x}=(x_1,\ldots , x_n) \in [-1,\; 1]^n
 \end{equation}
 where
 \begin{equation}\label{eq-deff}
 f(x)=g^{-1}(x)={\rm arctanh}(x)=\frac12 \ln \left(\frac{1+x}{1-x}\right ) 
 \end{equation}
 and $J$ is the coupling matrix. We further decompose $J$ as follows:
 \begin{equation}\label{decompJ}
 J=c_0 I+c_1J_1
 \end{equation}
where $I$ is the identity matrix, $c_0$ and $c_1$ are non negative coefficients and $c_1=1-c_0$. \\
Provided that $\lambda c_0>1$ the equation $\lambda c_0\beta={\rm arctanh}(\beta)$ has a couple of non zero solutions $\pm\beta_\lambda$ with $0<\beta_\lambda<1$.
Therefore when $c_1=0$, any vector of the form $\beta_\lambda(\xi_1,\dots,\xi_n)$ with $\xi_j=\pm 1$ is a stable steady-state of  \eqref{eq-stocyc}. If we think of vectors of the form $(\xi_1,\dots,\xi_n)$ as information strings in a neural network, then the above steady-states represent stored memory states. However it is well-known that memory states need not be steady (see \cite{gencicetal} and references therein). If $c_1>0$ the steady-states may become unstable or even disappear, but nevertheless information may still be dynamically stored. 

 We now explain the idea of  information storage by means of limit cycles of \eqref{eq-stocyc} (storing cycles), as explored in \cite{chuanetal}
 and then we introduce our idea to use robust heteroclinic cycles instead. \\
 The basic question adressed in \cite{chuanetal} is the following: given an information string, can it be stored by a Hopfield network in the form
 of dynamic information, more specifically a limit cycle? Concretely, the information is given in the form of a string of binary $n$-vectors
 (with components equal to $\pm 1$). The learning rule, consistent with Hebbian learning, is an algorithm specifying how the information string structures the coupling matrix 
 $c_0I+c_1J$ (we forget from now on the subscript 1 in $J_1$).
 This learning rule will be described in detail in Section \ref{sec-eqnetarc}.
 The main research question of \cite{chuanetal} is whether the system with the coupling structure resulting
 from applying the learning rule supports stable limit cycles that code the original information string in the sense 
 that the periodic orbit passes through the quadrants of $\R^n$ corresponding to the elements of the information string, following its order.
 
 In this article we focus on a  different version of such
 encoding by the dynamics, choosing a robust heteroclinic cycle as the invariant object
 encoding the information string. The condition we impose is that the cycle
 should connect equilibrium points located at vertices of the cube $[-1,\; 1]^n$
 corresponding to the elements of the information string, following its order.
 This is a rather natural condition, yet the first obstacle we must overcome is that with $f$ as given by \eqref{eq-deff} the RHS of \eqref{eq-stocyc}
 is not $C^1$ on the cube $[-1,\; 1]^n$, so that heteroclinic cycles cannot exist.
 We discuss this problem in more detail and propose a solution in the next section,
 which also relates to the work of  \cite{ashwinetal}, \cite{rabinovichPRL} and \cite{fukaitanaka}.
\subsection{The Lokta-Volterra approximation to Hopfield equations}\label{sec-LVapprox}
The articles \cite{ashwinetal}, \cite{rabinovichPRL} and \cite{fukaitanaka} consider the question
of the existence of robust heteroclinic cycles in the firing rate version of \eqref{eq-hopnet} 
and show that such cycles exist for a Lotka-Volterra approximation of the system. In this section we 
use a different combination of $g$ and $f=g^{-1}$ consistent with choice made in these articles.
 Specifically we will use the functions: 
 \begin{equation}\label{eq-functions}
 g(u)=\frac{1}{1+e^{-u}}\quad\mbox{and}\quad f(x)=\ln \left(\frac{x}{1-x}\right ).
 \end{equation}
 The coefficients $J_{ij}$ are assumed to be all positive
 so that the synaptic couplings are all of inhibitory type.
 We define the firing rate by $x_j=g(\lambda^{-1} u_j)$ and transform \eqref{eq-hopnet}
 to the firing rate formulation.
 After applying a time rescaling we obtain the following system.
  \begin{equation}\label{eq-hopnetfr}
 \dot x_i=x_i(1-x_i)\left(-\lambda f(x_i) -\sum_{i=1}^n J_{ij} x_j+I\right ),\qquad i=1,\ldots n.
 \end{equation}
 Note that system \eqref{eq-hopnetfr} is well defined and continuous on the cube $[0, 1]^n$,
 but it is not smooth on the faces, with the term
 \begin{equation}\label{eq-ns}
x_i(1-x_i) f(x_i)
 \end{equation}
 being the source of non-smootheness.  As we are interested in heteroclinic cyles that lie on the edges, with connections
 in the faces, this becomes a problem for the existence and stability of the cycle.

 Since our purpose is merely to illustrate the problem of the lack of smoothness 
 we restrict our attention to the simplest case $n=3$. Then \eqref{eq-hopnetfr} has the form
  \begin{align}\label{eq-hopnetfr3}
  \begin{split}
 \dot x_1&=x_1(1-x_1)\left(-\lambda f(x_1) - J_{11} x_1-J_{12} x_2-J_{13} x_3+I\right )\\
 \dot x_2&=x_2(1-x_2)\left(-\lambda f(x_2) - J_{21} x_1-J_{22} x_2-J_{23} x_3+I\right )\\
 \dot x_3&=x_3(1-x_3)\left(-\lambda f(x_3)  - J_{31} x_1-J_{32} x_2-J_{33} x_3+I\right ).
 \end{split}
 \end{align}
 For simplicity we assume $J_{jj}=1$. The goal is to construct heteroclinic cycles connecting equilibria of the form:
 \[
 (\rho, 0, 0),\; (0, \rho, 0)\;\mbox{and}\; (0,0,\rho)\quad\mbox{where}\; -\lambda f(\rho)-\rho +I=0. 
 \]
The Jacobian matrix at such equilibria is given as follows:
\[
\left (\begin{array}{ccc} -\rho(1-\rho)-\lambda&-J_{12}(1-\rho)\rho &-J_{13}(1-\rho)\rho\\
                                                   0                & -\lambda f(0)-J_{21}\rho+I -\lambda&       0               \\
                                                   0                &               0                    & -\lambda f(0)-J_{31}\rho+I -\lambda\end{array}\right ).
\] 
Since $f$ blows up at $0$ the Jacobian is undefined. If the term \eqref{eq-ns} is neglected in the RHS of each
equation of \eqref{eq-hopnetfr3} then a heteroclinic cycle can be easily found, with
\begin{equation}\label{eq-Jeg}
J=\left (\begin{array}{ccc} 1 & 1.25 & 0\\
                                      0.875 & 1 & 1.25 \\
                                     3 & 0.625 & 1 \end{array}\right )
\end{equation}
giving an example \cite{ashwinetal}.
\subsection{Regularization}\label{sec-mod}
We propose two approaches to regularize the function $f$. First approach,
which we use in this paper, is to replace $f$ defined in \eqref{eq-functions} by its Taylor polynomial
at $x=0.5$. We denote such Taylor polynomial of degree $q$ as $f_q$.
Note that the sequence $\{f_q\}$ diverges at $0$ and $1$ when $q\to\infty$, but
converges uniformly to $f$ on any compact subinterval in $(0, 1)$. 

Another approach is to replace $f$ by
\begin{equation}\label{eq-feps}
f_\eps(x)=\ln \left(\frac{x+\eps}{1+\eps-x}\right ),
\end{equation}
where $\eps$ is a small parameter. The function $f_\eps$
is well defined on the interval $[0, 1]$, yet its  properties are similar to $f$,
in particular its derivative at $0$ and $1$ equals $1/(\eps(1+\eps)$, thus is very large.
If we replace $f$ by $f_q$ or $f_\eps$ in \eqref{eq-hopnetfr3} then, depending on the relative
size of $\lambda$ and $q$ or $\eps$, any of the three possibilities can arise:
\begin{enumerate}
\item the cycle does not exist,
\item the cycle exists and is unstable,
\item the cycle exists and is stable.
\end{enumerate}
Fig. \ref{fig} shows simulations of \eqref{eq-hopnetfr3} with $f$ replaced by $f_\eps$.
The matrix $J$ is as given in \eqref{eq-Jeg}, $I=0.8$ and $\lambda=0.01$. The value of $\eps$
is varied showing an example of each of the possible cases. 
\begin{figure}
\hspace{-1.2cm}\includegraphics[scale=0.6]{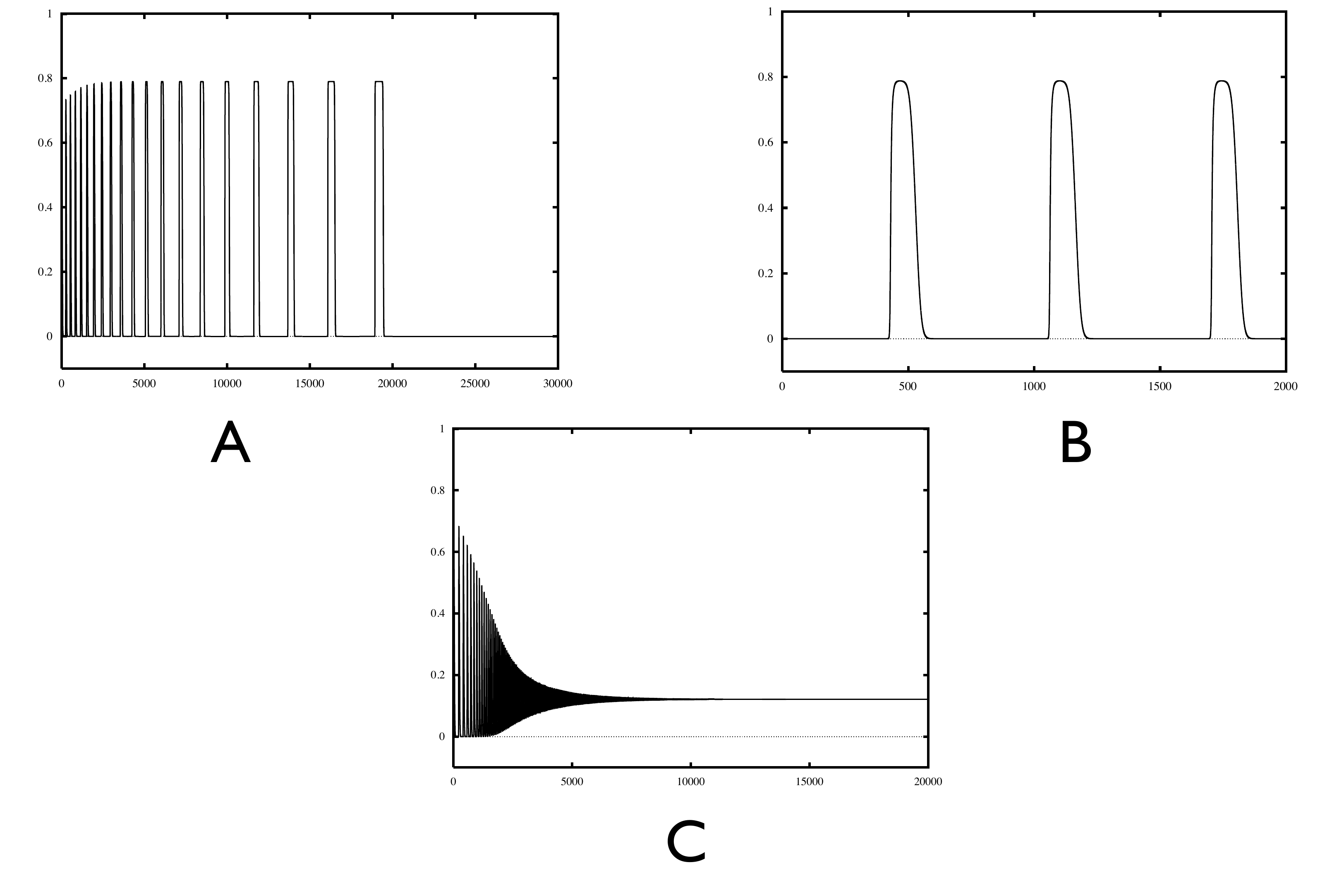}
\caption{Simulations with $J$ is as given in \eqref{eq-Jeg}, $I=0.8$ and $\lambda=0.01$
illustrate the three possible cases for the regularized system corresponding to three different values of $\eps$. 
Panel {\bf A} shows a heteroclinic
cycle, with $\eps=0.12$, panel {\bf B} shows a periodic orbit close to an unstable cycle, with $\eps=0.07$, panel {\bf C} shows the dynamics
attracted to an equilibrium for the case when no cycle exists, with $\eps=0.02$.}
\label{fig}
\end{figure}

\section{Hopfield networks with coupling given by the learning rule of \cite{personnazetal}}\label{sec-storcyc} 
\subsection{The equations and network architecture}\label{sec-eqnetarc}
We now return to the formulation \eqref{eq-stocyc}  with $f$ replaced by its $q$-th order polynomial expansion $f_q$ at $x=0$ as described in Section \ref{sec-mod}. The equation now reads
\begin{equation}\label{eq-f_q}
 \dot {\mathbf x}=(I-\mbox{ diag}\; {\mathbf x}\cdot {\mathbf x})\left(\lambda (c_0 {\mathbf x}+c_1J{\mathbf x}) - f_q({\mathbf x})\right ), {\mathbf x}\in [-1,1]^n,
\end{equation} 
with $f_q( {\mathbf x})=(f_q(x_1),\ldots,f_q(x_n))$ and 
$$f_q(x) = x+\frac{x^3}{3}+\cdots+\frac{x^q}{q}.$$
The power series $f_\infty$ has a radius of convergence equal to 1. It follows that given any interval $(-1+\epsilon,1-\epsilon)$, the approximation of $f$ by $f_q$ can be as good as we wish provided that $q$ is large enough. 

We now give a formal description of the information string and introduce the learning rule. \\
 A {\em binary pattern} (or simply a {\em pattern}) is a vector $\xi$ of binary states of $n$ neurons: $\xi=(x_1,\ldots,x_n)^t$ with $x_j=\pm 1$.
Let 
 \begin{equation}\label{eq:transitionmatrix}
 \Sigma=\left(\xi^1,\xi^2,\ldots ,\xi^p\right)
 \end{equation} 
 be a sequence of $p$ patterns. This matrix is called a {\em cycle} if there exists a connectivity matrix $J$ such that the corresponding network of $n$ neurons visits sequentially and cyclically the patterns defined by $\Sigma$.
In other words each column $\xi^j$ can be associated with a state of the system such that the signs of the cell variables are equal
to the signs of the corresponding components of $\xi^j$. We shall always assume $p\geq n$. \\
Let $P$ be the matrix of the cyclic permutation $(x_1, x_2, \ldots, x_{p-1}, x_p)\to (x_2, x_3, \ldots, x_p, x_1)$.
The cycle $\Sigma$ is called {\em admissible} if there exists $J$ such that
 \begin{equation}\label{eq-defsig}
 J\Sigma=\Sigma P,
 \end{equation}
has a solution \cite{personnazetal}. This relation expresses a necessary condition for the network \eqref{eq-stocyc} to possess a solution that periodically takes the signs defined by the patterns $\xi_1,\ldots,\xi_p$. Note that, if $\Sigma$ is admissible then a solution exists in the form
 \begin{equation}\label{sol-pseudoinverse}
 J=\Sigma P\Sigma^+
 \end{equation}
where $\Sigma^+$ is the Moore-Penrose pseudo-inverse of $\Sigma$, and if $\Sigma$ has full rank  it is unique. 

A cycle $\Sigma$ is called {\em simple} \cite{chuanetal} if there exists a vector $\eta\in\R^p$ such that each row of $\Sigma$ equals $\eta P^{l_j}$, for some $p>l_j\ge 0$. 
We define 
\begin{equation}\label{eq-defW}
W_\eta={\rm span}\; \{\eta P^j\, :\, j=0,\ldots, p-1\}.
\end{equation}
By Theorem 2 in \cite{chuanetal} a simple cycle is admissible if and only if ${\rm dim}\; W_\eta=Rank(\Sigma)$. \\
If in addition we can write 
 \begin{equation}\label{eq:consecutiveSigma}
 \Sigma=\left(\eta^t,(\eta P)^t,\ldots ,(\eta P^{n-1})^t\right)^t
 \end{equation}
then the simple cycle is called {\em consecutive}. The following proposition is essentially contained in Sec. 5.1.1 of \cite{chuanetal}.
\begin{Proposition} \label{prop:Jconsecutivecycle}
If a simple consecutive cycle is admissible, then there exists a $J$ satisfying \eqref{eq-defsig} of the form
\begin{equation}\label{eq:Js}
 J=\left (\begin{array}{rrrrrr} 0&1&0&\ldots&0&0\\0&0&1&\ldots&0&0\\\vdots&\vdots&\vdots&\ddots&\ddots\\0&0&0&\ldots&0\\a_0&a_1&a_1&\ldots&a_{n-2}&a_{n-1}\end{array}\right ).
 \end{equation}
 where $a_0,\ldots,a_{n-1}$ are rational coefficients. If $\Sigma$ has full rank then $J$ is uniquely defined and $a_0\neq 0$ (in this case the cycle is minimal in the sense of \cite{chuanetal}).
\end{Proposition}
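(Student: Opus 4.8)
The plan is to verify the companion ansatz \eqref{eq:Js} directly against \eqref{eq-defsig}, row by row, so that the whole statement collapses to a single linear condition on the bottom row. Write $r_k$ for the $k$-th row of $\Sigma$. By the consecutive form \eqref{eq:consecutiveSigma} we have $r_k=\eta P^{k-1}$, hence $r_kP=r_{k+1}$ for $k<n$; reading off rows, the $k$-th row of $\Sigma P$ is $\eta P^{k}$. For a matrix $J$ of the form \eqref{eq:Js}, the $1$'s on the superdiagonal make the $k$-th row of $J\Sigma$ equal to $r_{k+1}$ for $k=1,\ldots,n-1$, which already matches $\Sigma P$ in those rows. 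Thus \eqref{eq-defsig} holds if and only if the bottom rows agree, i.e.
\[
a_0\,\eta + a_1\,\eta P+\cdots+a_{n-1}\,\eta P^{\,n-1}=\eta P^{\,n}.
\]
Everything therefore reduces to showing that $\eta P^{\,n}$ lies in $W':=\mathrm{span}\{\eta,\eta P,\ldots,\eta P^{\,n-1}\}$, the row space of $\Sigma$.

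The crux is to extract this membership from admissibility. Since $P$ is a cyclic permutation of order $p$, we have $P^{p}=I$, so $W_\eta$ is $P$-invariant and in particular $\eta P^{\,n}\in W_\eta$. Now $W'\subseteq W_\eta$ and $\dim W'=\mathrm{Rank}(\Sigma)$; the admissibility criterion (Theorem 2 of \cite{chuanetal}), $\dim W_\eta=\mathrm{Rank}(\Sigma)$, then forces $W'=W_\eta$, whence $\eta P^{\,n}\in W'$ and the coefficients $a_0,\ldots,a_{n-1}$ exist. Rationality is automatic: the first row of $\Sigma$ is $\eta$, so $\eta\in\{\pm1\}^{p}$ and every $\eta P^{m}$ (a permutation of $\eta$) is integer-valued, so the linear system determining the $a_m$ has integer data and therefore a rational solution.

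For the full-rank case I would argue that if $\mathrm{Rank}(\Sigma)=n$ then $r_1,\ldots,r_n$ are independent, so the expansion of $\eta P^{\,n}$ in $W'$ is unique and pins down the bottom row; moreover the columns of $\Sigma$ then span $\R^{n}$, so any solution of \eqref{eq-defsig} is determined on a spanning set and $J$ is unique among all matrices, not merely among companion ones (consistent with $J=\Sigma P\Sigma^+$ in \eqref{sol-pseudoinverse}). To see $a_0\neq0$, suppose $a_0=0$; multiplying the bottom-row relation on the right by $P^{-1}=P^{\,p-1}$ gives $\eta P^{\,n-1}\in\mathrm{span}\{\eta,\ldots,\eta P^{\,n-2}\}=\mathrm{span}\{r_1,\ldots,r_{n-1}\}$, which contradicts the independence of $r_1,\ldots,r_n$ since $r_n=\eta P^{\,n-1}$.

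The main obstacle I anticipate is the middle step: converting the numerical equality $\dim W_\eta=\mathrm{Rank}(\Sigma)$ into the structural identity $W'=W_\eta$, and thence into the concrete membership $\eta P^{\,n}\in W'$. Once that is in hand the rest is bookkeeping with the companion form.
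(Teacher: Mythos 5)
Your proposal is correct and follows essentially the same route as the paper: verify the companion ansatz row by row, reduce everything to the membership $\eta P^{\,n}\in\mathrm{span}\{\eta,\ldots,\eta P^{\,n-1}\}$, get rationality from the integrality of the $\eta P^{j}$, and use full rank for uniqueness and $a_0\neq 0$. The only difference is at the step you flag as the obstacle: the paper obtains the membership directly from the definition of admissibility (any $J$ with $J\Sigma=\Sigma P$ forces the last row $\eta P^{\,n}$ of $\Sigma P$ to lie in the row space of $\Sigma$), which is shorter than your detour through the criterion $\dim W_\eta=\mathrm{Rank}(\Sigma)$ and the $P$-invariance of $W_\eta$, though your version is equally valid.
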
   
\begin{proof}
By construction we can write 
$$\Sigma = \left (\begin{array}{l}\eta \\ \eta P \\ \vdots \\ \eta P^{n-1}\end{array}\right ) \text{~~and~~~~} \Sigma P = \left (\begin{array}{l}\eta P \\ \eta P^2 \\ \vdots \\ \eta P^n \end{array}\right ).$$
By admissibility $\Sigma P=J\Sigma$ and moreover $\eta P^n$ must be a linear combination of the $\eta P^j$'s. Hence \eqref{eq:Js} follows. The $a_j$'s are rational because the vectors $\eta P^j$ have integer coordinates. 
If $Rank(\Sigma)=n$ then $J$ is non singular, hence $a_0\neq 0$.
\end{proof}
\begin{example}\label{ex-basic}
We consider $\Sigma$ as follows, with $p=6$:
 \begin{equation}\label{eq-seq6}
 \Sigma=\left (\begin{array}{rrrrrr}1&1&1&-1&-1&-1\\1&1&-1&-1&-1&1\\1&-1&-1&-1&1&1\end{array}\right ),
  \end{equation}
  Let $\eta=(1,1,1,-1,-1,-1)$. Note that the rows of $\Sigma$ are $\eta$, $\eta P$ and $\eta P^2$. Note also that $\eta P^3=-\eta$. It follows that the rows of $\Sigma P$ are 
  $\eta P$, $\eta P^2$ and $-\eta$, i.e. the second, the third and the negative of the first row of $\Sigma$. Hence
  \begin{equation}\label{eq-temp}
  \left (\begin{array}{rrr} 0&1&0\\0&0&1\\-1&0&0\end{array}\right ) \Sigma =\Sigma P.
  \end{equation}
  Since the rows of $\Sigma$ are independent the matrix $\Sigma\Sigma^T$ is invertible. Hence
  \eqref{eq-defsig} has a unique solution which, by \eqref{eq-temp}, must be given by:
  \begin{equation}\label{eq:Jn=3}
 J=\left (\begin{array}{rrr} 0&1&0\\0&0&1\\-1&0&0\end{array}\right ).
 \end{equation}
 Note that  $\Sigma^+=\Sigma^T(\Sigma\Sigma^T)^{-1}$ and $J$ satisfies \eqref{sol-pseudoinverse}.
 
 This matrix provides a simple example of heteroclinic cycle, which we illustrate in Fig. \ref{hetcyc36}: the reader can check on this numerical simulation that indeed trajectories follow the pattern defined by $\Sigma$. Observe that the trajectory closely follows the edges of the cube connecting the equilibria in the pattern. The analysis is easy but it follows directly from Proposition \eqref{prop:condition edge cycle} in Section 4.2 (see Example \ref{ex:edgecycle}).
\begin{figure}[h]
\center\includegraphics[scale=0.45]{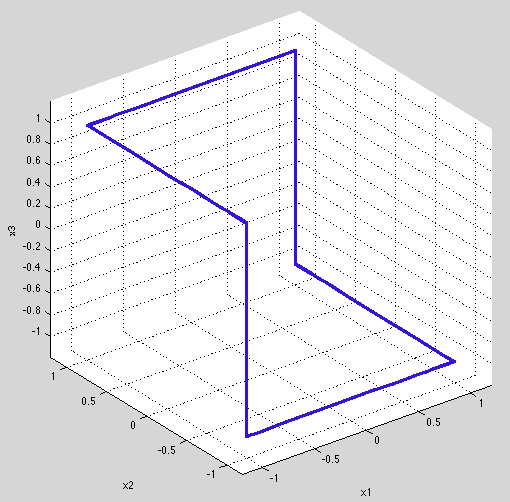}
\caption{A trajectory of \eqref{eq-stocyc} with $J$ given by \eqref{eq:Jn=3} and with $c_0=0.6$, $\lambda=8$. Initial conditions close to $(1,1,1)$.}
\label{hetcyc36}
\end{figure}
  
 \end{example}
\subsection{Classification of simple consecutive cycles}\label{sec-clahop}
Suppose that $p$ is fixed and note that every consecutive cycle is uniquely determined by the choice 
of $\eta$ and $n$. If $n\ge p$ then such a cycle is always admissible.
If $n<p$ then there are only very special choices of $\eta$ and $n$ such that the cycle is admissible. In this section we will address the question
of finding the conditions on $\eta$ so that there exists an $n<p$ such that the cycle determined by $\eta$ and $n$ is admissible. 
In order to avoid confusion with prime numbers we will, throughout this section, use the letter $m$ instead of
$p$ to denote the dimension of $\eta$. We will return to the original notation of \cite{chuanetal} in the subsequent sections. 

\noindent Consider a simple cycle as defined in Section \ref{sec-eqnetarc}, with $\eta$ corresponding to the first row of $\Sigma$.
If $\Sigma$ is admissible the there exists $J$ in the form given by \eqref{eq:Js} such that  \eqref{eq-defsig} is satisfied. Let $(a_0, a_1, \ldots, a_{n-1})$ be the
last row of $J$ (see \eqref{eq:Js}) and let 
\begin{equation} \label{eq-defpolypsi}
\psi(x)=x^n-a_{n-1}x^{n-1}-\ldots -a_1x-a_0.
\end{equation}
It follows from  \eqref{eq-defsig} that $\eta\in\ker(\psi(P))$.
In this section we use the following result:
\begin{Theorem}\label{th-charinvsp}
Let $V$ be a non-trivial invariant subspace of the action of $P$ on $\R^m$. Then there exists a polynomial $\phi(x)$, which is a divisor of $x^m-1$,
such that $V=\ker(\phi(P))$. Moreover, for any $\tilde \phi$ the inclusion $V\subset \ker(\tilde\phi(P))$ holds if and only if $\phi$ is a divisor of $\tilde\phi$.
\end{Theorem}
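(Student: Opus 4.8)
The plan is to recognize the statement as a purely module-theoretic fact about the $\R[x]$-module structure that $P$ induces on $\R^m$ (with $x$ acting as $P$), and to exploit the special feature that the characteristic polynomial of the $m$-cycle permutation matrix $P$ is exactly $x^m-1$. Since we are in characteristic zero, $x^m-1$ is squarefree (it is coprime to its derivative $m x^{m-1}$), so the characteristic polynomial of $P$ is squarefree. This forces the minimal polynomial to coincide with the characteristic polynomial, both equal to $x^m-1$, and in particular $P$ is semisimple and each irreducible factor of its characteristic polynomial occurs with multiplicity one. First I would factor $x^m-1=p_1(x)\cdots p_k(x)$ into distinct monic irreducibles over $\R$ (linear and quadratic factors) and invoke the primary decomposition $\R^m=\bigoplus_{i=1}^k \ker(p_i(P))$, coming from the fact that the $p_i$ are pairwise coprime and $\prod_i p_i(P)=P^m-I=0$. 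Because each $p_i$ appears to the first power, every summand $\ker(p_i(P))$ is a simple $\R[x]$-module of dimension $\deg p_i$, and the summands are pairwise non-isomorphic (they have distinct annihilators $(p_i)$).

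With this decomposition in hand, the classification of invariant subspaces is immediate: a submodule of a multiplicity-free semisimple module is precisely a direct sum of a subset of its simple summands. Hence every nontrivial invariant subspace has the form $V=\bigoplus_{i\in S}\ker(p_i(P))$ for a unique nonempty $S\subseteq\{1,\dots,k\}$. I would then set $\phi(x)=\prod_{i\in S}p_i(x)$, which is a nonconstant divisor of $x^m-1$, and check that $\ker(\phi(P))=\bigoplus_{i\in S}\ker(p_i(P))=V$, using that $\ker(p_i(P))\subseteq\ker(\phi(P))$ exactly when $p_i\mid\phi$. This yields the existence claim.

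For the ``moreover'' part I would argue that for any polynomial $\tilde\phi$ the operator $\tilde\phi(P)$ annihilates the simple summand $\ker(p_i(P))$ if and only if $p_i\mid\tilde\phi$, since $P$ restricted to that summand has minimal polynomial $p_i$. Therefore $V\subseteq\ker(\tilde\phi(P))$ if and only if $p_i\mid\tilde\phi$ for every $i\in S$, and as the $p_i$ are distinct irreducibles this is equivalent to $\phi=\prod_{i\in S}p_i$ dividing $\tilde\phi$. The point requiring the most care — more a subtlety than a deep obstacle — is carrying the argument out over $\R$ rather than $\C$: one cannot simply diagonalize $P$ and select eigenspaces, because the $p_i$ may be quadratic, so the clean classification of invariant subspaces must come from the real (rational) canonical form together with the multiplicity-one property, which is precisely what the squarefreeness of $x^m-1$ supplies.
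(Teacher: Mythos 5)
Your proof is correct, but it takes a genuinely different route from the paper's. You exploit the fact that $x^m-1$ is squarefree over $\R$ (coprime to its derivative), so $P$ is semisimple with minimal polynomial equal to its characteristic polynomial; the primary decomposition $\R^m=\bigoplus_i\ker(p_i(P))$ is then a multiplicity-free sum of pairwise non-isomorphic simple $\R[x]$-modules, and the classification of invariant subspaces drops out of the standard description of submodules of such a module. The paper instead proves the statement as a special case of a general theorem on \emph{cyclic} (nonderogatory) transformations: it sets up the correspondence $W\mapsto f_W$ between invariant subspaces and monic generators of annihilator ideals in the PID $\R[x]$, shows every invariant subspace of a cyclic transformation is itself cyclic (Lemma~\ref{lem-cyc}), and identifies $\ker(f(T))=\Im(g(T))$ for each factorization $fg=f_T$ via a dimension count. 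The paper's machinery is more general --- it applies to cyclic operators whose minimal polynomial has repeated irreducible factors, where semisimplicity fails and your argument would not apply --- and it additionally produces each invariant subspace explicitly as an image $\Im(g(P))$ and as a cyclic subspace. Your argument is shorter and more conceptual for the specific operator $P$ at hand, since it leans on the special arithmetic fact that $x^m-1$ has no repeated factors in characteristic zero; it establishes exactly what the theorem claims, including the divisibility criterion in the ``moreover'' clause, which you correctly derive from the observation that $P$ restricted to $\ker(p_i(P))$ has minimal polynomial $p_i$.
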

\noindent Theorem \ref{th-charinvsp} follows from some classical results of algebra, which we will review in the appendix, thereby providing the proof.
We now state two corollaries of Theorem \ref{th-charinvsp} which we will use to characterize the possible choices of $\eta$ for which ${\rm dim} (W_\eta) < m$.
\begin{Corollary}\label{cor-weta1}
If $n={\rm dim}\; (W_\eta)<m$ then $W_\eta=\ker(\psi(P))$ and $\psi$ is a divisor of $x^m-1$.
\end{Corollary}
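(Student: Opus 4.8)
The plan is to apply Theorem~\ref{th-charinvsp} to the cyclic subspace $W_\eta$ and then to identify the divisor of $x^m-1$ that it produces with the polynomial $\psi$ of \eqref{eq-defpolypsi}, matching the two by a degree count. First I would check that Theorem~\ref{th-charinvsp} applies. Since $\eta$ is a nonzero ($\pm1$) vector, the space $W_\eta$ defined in \eqref{eq-defW} contains $\eta\neq 0$ and is therefore non-trivial, while the hypothesis $n=\dim(W_\eta)<m$ makes it a proper subspace. It is $P$-invariant: each generator satisfies $(\eta P^j)P=\eta P^{j+1}\in W_\eta$, the top index wrapping around via $P^m=I$. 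Hence Theorem~\ref{th-charinvsp} furnishes a monic polynomial $\phi$ dividing $x^m-1$ with $W_\eta=\ker(\phi(P))$.

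Next I would pin down $\phi$ from above. Recall from the text preceding the corollary that $\eta\in\ker(\psi(P))$. Because $\psi(P)$ commutes with $P$, the subspace $\ker(\psi(P))$ is itself $P$-invariant, so it contains every $\eta P^j$ and therefore $W_\eta\subseteq\ker(\psi(P))$. Applying the second (``moreover'') assertion of Theorem~\ref{th-charinvsp} with $\tilde\phi=\psi$ then yields $\phi\mid\psi$.

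It remains to show that these two monic polynomials coincide, which I would do by comparing degrees. As $\psi$ has degree $n$ by construction, it suffices to prove $\deg\phi=n=\dim(W_\eta)$. Here I use that $x^m-1$ is separable in characteristic zero, so that for a divisor $\phi$ one has the dimension formula $\dim\ker(\phi(P))=\deg\phi$. This gives $\deg\phi=\dim\ker(\phi(P))=\dim(W_\eta)=n=\deg\psi$, and together with $\phi\mid\psi$ and monicity it forces $\phi=\psi$. Substituting back yields $W_\eta=\ker(\phi(P))=\ker(\psi(P))$ and $\psi=\phi\mid x^m-1$, which is the assertion of the corollary.

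The one delicate point is precisely the dimension formula $\dim\ker(\phi(P))=\deg\phi$. It rests on the separability of $x^m-1$ and, equivalently, on the fact that the full cyclic permutation $P$ has characteristic polynomial $x^m-1$ with simple roots, so that $P$ is diagonalizable with each $m$-th root of unity occurring as a simple eigenvalue; then $\ker(\phi(P))$ is the direct sum of the one-dimensional eigenspaces attached to the roots of $\phi$, whence its dimension equals the number of those roots, namely $\deg\phi$. I expect this eigenstructure of $P$ to be exactly the classical material assembled in the appendix for the proof of Theorem~\ref{th-charinvsp}, so I would invoke it rather than redevelop it here.
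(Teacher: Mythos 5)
Your proof is correct and follows essentially the same route as the paper's: invoke Theorem~\ref{th-charinvsp} to get a divisor $\phi$ of $x^m-1$ with $W_\eta=\ker(\phi(P))$ and $\phi\mid\psi$, then force $\phi=\psi$ by comparing $\deg\phi$ with $n=\deg\psi$ via the dimension of $W_\eta$. The only cosmetic difference is that you justify $\dim\ker(\phi(P))=\deg\phi$ by the diagonalizability of $P$ over $\C$ (separability of $x^m-1$), whereas the paper's appendix obtains the same count from the theory of cyclic transformations; both are valid.
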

\noindent {\bf Proof}. It is easy to see that $W_\eta\subset \ker(\psi(P))$. We will prove that the opposite inclusion holds 
and that $\psi$ is a divisor of $x^m-1$. By Theorem \ref{th-charinvsp} there exists $\phi$ a divisor of $x^m-1$ 
such that $\ker(\phi(P))=W_\eta$ and $\phi$ divides $\psi$. Suppose that $\phi$ is a proper divisor of $\psi$.
Then 
\[
n={\rm dim}\; (W_\eta)\le \deg(\phi)<n,
\]
which is a contradiction. It follows that $\psi=\phi$. Hence the corollary holds.\\[1ex]
\noindent  For $\phi$ a minimal divisor of $x^m-1$ (over ${\mathbb Q}$) let $\psi=(x^m-1)/\phi$ and let $W_\phi= \ker(\psi)$.
The following result  leads to a characterization of $\eta$s such that $n={\rm dim} (W_\eta)<m$.
\begin{Corollary}\label{cor-weta2}
If $n={\rm dim} (W_\eta)<m$ then $\eta\in W_\phi$, for some $\phi$ a minimal divisor of $x^m-1$ (over ${\mathbb Q}$).
\end{Corollary}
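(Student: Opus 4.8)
The plan is to deduce the result from Corollary \ref{cor-weta1} together with the squarefreeness of $x^m-1$ over $\Q$. By Corollary \ref{cor-weta1}, the hypothesis $n={\rm dim}(W_\eta)<m$ forces $W_\eta=\ker(\psi(P))$, with $\psi$ a divisor of $x^m-1$ of degree $n<m$; in particular $\psi$ is a \emph{proper} divisor. So it remains to produce a minimal (that is, irreducible) divisor $\phi$ of $x^m-1$ for which $\eta\in W_\phi=\ker\big(((x^m-1)/\phi)(P)\big)$, and since $\eta\in W_\eta$ it suffices to exhibit such a $\phi$ with $W_\eta\subseteq W_\phi$.

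The key algebraic observation I would use is that $x^m-1$ is squarefree over $\Q$, because it shares no root with its derivative $mx^{m-1}$ (we work in characteristic zero). Hence $x^m-1$ factors as a product of \emph{distinct} irreducible polynomials over $\Q$ — exactly its minimal divisors — and every divisor of $x^m-1$, in particular $\psi$, is the product of some subset of these irreducible factors.

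Now, because $\psi$ is a proper divisor, at least one irreducible factor $\phi$ of $x^m-1$ fails to appear in $\psi$. For this $\phi$, every irreducible factor dividing $\psi$ also divides $(x^m-1)/\phi$, and squarefreeness then gives $\psi\mid (x^m-1)/\phi$. Writing $(x^m-1)/\phi=\psi\cdot\theta$ and using that $\psi(P)$ and $\theta(P)$ commute, I get $\ker(\psi(P))\subseteq\ker\big(((x^m-1)/\phi)(P)\big)=W_\phi$, i.e. $W_\eta\subseteq W_\phi$. Since $\eta\in W_\eta$ by Corollary \ref{cor-weta1}, this yields $\eta\in W_\phi$, which is the claim.

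The only point requiring genuine care — and hence the main obstacle — is the squarefreeness of $x^m-1$: it is precisely what guarantees that deleting a single irreducible factor from $x^m-1$ leaves a multiple of $\psi$, so that the kernel inclusion runs in the direction we need. Everything else reduces to the elementary fact that $g\mid h$ implies $\ker(g(P))\subseteq\ker(h(P))$.
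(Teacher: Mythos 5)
Your proof is correct and follows essentially the same route as the paper: apply Corollary \ref{cor-weta1} to write $W_\eta=\ker(\psi(P))$ with $\psi$ a proper divisor of $x^m-1$, pick an irreducible factor $\phi$ of $x^m-1$ \emph{not} occurring in $\psi$, deduce from squarefreeness that $\psi$ divides $(x^m-1)/\phi$, and conclude the kernel inclusion $W_\eta\subseteq W_\phi$. Worth noting: your choice of $\phi$ is the correct one, whereas the paper's proof as written selects a minimal divisor $\phi$ that \emph{does} divide $\tilde\psi$, which (precisely because $x^m-1$ is squarefree over $\Q$) would make its next assertion ``$\tilde\psi$ divides $(x^m-1)/\phi$'' fail; the intended condition is the one you use, namely that $\phi$ does not divide $\tilde\psi$, which exists exactly because $\deg\tilde\psi=n<m$.
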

\noindent {\bf Proof} By Corollary \ref{cor-weta1} there exists $\tilde \psi$, a divisor of $x^m-1$, such that $W_\eta={\rm ker}(\tilde\psi)$.
By unique decomposition into prime factors over $\Q$ there exists a minimal divisor $\phi$ of $x^m-1$ which divides $\tilde\psi$.
Let $\psi=(x^m-1)/\phi$. Clearly $\tilde \psi$ divides $\psi$. It follows from Theorem \ref{th-charinvsp} that $\eta\in{\rm ker}(\psi)=W_\phi$.\\[1ex]
\noindent
In the remainder of this section we will derive the conditions  on $\eta$ needed for $\eta\in W_\phi$ for some (the simplest)
choices of $\phi$, where $\phi$ is  a minimal divisor of $x^n-1$.
We begin by recalling the decomposition of $x^n-1$ into irreducible polynomials over $\mathbb{Q}$. For a positive integer $k$ let
\[
\phi_k(x)=\sum_{i=0}^{k-1}x^i.
\]
The polynomials $\phi_p$, where $p$ is a prime number, are irreducible over $\mathbb{Q}$.
For a prime number $p$ and a non-negative integer $j$ we define
\[
\phi_{p,j}(x)=\sum_{i=0}^{p-1}x^{ip^j}.
\]
Note that $\phi_{p,1}=\phi_p$. The polynomials $\phi_{p,j}(x)$ are the irreducible factors over $\mathbb{Q}$ of the polynomial $\phi_{p^k}$.
Suppose $m=m_1m_2\ldots m_l$, with $m_j=p_j^{k_j}$, $j=1,\ldots, l$, and let
\[
\Phi_m(x)=\phi_m\big /\prod_{j=1}^l \phi_{m_j}(x).
\]
The polynomial $\Phi_m(x)$ is called the cyclotomic polynomial of degree $m$ and is irreducible. It now follows that the decomposition of $x^m-1$
into irreducible factors over  $\mathbb{Q}$ given by
\begin{equation}\label{eq-facto}
x^m-1=(x-1)\Phi_m(x)\prod_{j=1}^l\left (\prod_{i=0}^{k_j-1} \phi_{p_j,i}(x)\right ). 
\end{equation}
All the possible factors of $x^m-1$ over $\mathbb{Q}$ are products of the irreducible factors appearing in \eqref{eq-facto},
hence all the possible choices of $W_\phi$ are obtained that way. 
As announced above we now describe some of the spaces $W_\phi$ by simple conditions on the components of $\eta$.
\begin{Proposition}\label{prop-cond1}
If $\phi=\phi_{p_j}$, for some prime number $p_j$ then 
$W_\phi$ consists of vectors $\eta=(b_0,\ldots , b_{m-1})$ satisfying 
\begin{equation}\label{eq-cond1}
\sum_{i=0}^{m/p_l-1} b_{ip_j}=\sum_{i=0}^{m/p_l-1} b_{ip_j+1}=\ldots \sum_{i=0}^{m/p_l-1} b_{ip_j+p_j-1}.
\end{equation}
\end{Proposition}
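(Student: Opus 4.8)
The plan is to make the membership condition $\eta\in W_\phi=\ker(\psi(P))$ completely explicit by first computing $\psi=(x^m-1)/\phi_{p_j}$ and then using that $P$ is a cyclic shift, whose only fixed vectors are the constant ones.

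First I would compute $\psi$. Writing $r=m/p_j$ (an integer, since $p_j\mid m$), I would use the identity $\phi_{p_j}(x)\,(x-1)=x^{p_j}-1$ together with the factorization $x^m-1=(x^{p_j}-1)\sum_{i=0}^{r-1}x^{p_j i}$ to obtain
\[
\psi(x)=\frac{x^m-1}{\phi_{p_j}(x)}=(x-1)\sum_{i=0}^{r-1}x^{p_j i}.
\]
Setting $S=\sum_{i=0}^{r-1}P^{p_j i}$, the defining relation $\eta\,\psi(P)=0$ then becomes $(\eta S)(P-I)=0$, since $S$ and $P-I$ are commuting polynomials in $P$.

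Next I would identify $\ker(P-I)$. Because $P$ is the matrix of a cyclic permutation, the relation $w(P-I)=0$, i.e.\ $wP=w$, forces all components of $w$ to coincide; hence $\ker(P-I)$ is exactly the line of constant vectors. Consequently $\eta\in W_\phi$ if and only if $\eta S$ is a constant vector. I would then compute the components of $\eta S$: with $\eta=(b_0,\dots,b_{m-1})$ one finds $(\eta S)_k=\sum_{i=0}^{r-1}b_{k+ip_j}$, the indices being read modulo $m$. The key observation is that replacing $k$ by $k+p_j$ merely cyclically relabels the summation index (the term $i=r$ wraps around to $i=0$ because $rp_j=m$), so $(\eta S)_k$ depends only on $k$ modulo $p_j$. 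Therefore $\eta S$ is constant precisely when its first $p_j$ components agree, which is exactly condition \eqref{eq-cond1} (with the upper limit $r=m/p_j$).

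The argument is essentially bookkeeping, and I expect no serious obstacle beyond tracking the modular index arithmetic correctly and fixing the convention (row versus column action) under which $\eta\,\psi(P)=0$ is read. As a consistency check I would note that $\deg\psi=m-p_j+1$, so $\dim W_\phi=m-(p_j-1)$, matching the $p_j-1$ independent equalities imposed by \eqref{eq-cond1}; this confirms that the explicit linear conditions cut out exactly the whole kernel rather than a proper subspace.
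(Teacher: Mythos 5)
Your proof is correct and follows essentially the same route as the paper: both factor $\psi(x)=(x^m-1)/\phi_{p_j}(x)$ as $(x-1)\sum_{i=0}^{m/p_j-1}x^{p_j i}$ and read the condition \eqref{eq-cond1} off the components of the vector $S\eta$ with $S=\sum_{i=0}^{m/p_j-1}P^{p_j i}$. You are in fact slightly more complete, since by identifying $\ker(P-I)$ with the constant vectors (and checking the dimension count) you also establish the converse direction, whereas the paper's proof only verifies that \eqref{eq-cond1} implies $\psi(P)\eta=0$.
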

\noindent {\bf Proof}
 we use the following identity:
\[
\psi(x)=(x-1)\phi_{m/p_j}(x^{p_j}).
\]
Hence
\begin{equation}\label{eq-tocond1}
\Psi(P)\eta=(P-I)\left ( \sum_{i=0}^{m/p_l-1} b_{ip_l}, \sum_{i=0}^{m/p_l-1} b_{ip_l+1},\ldots , \sum_{i=0}^{m/p_l-1} b_{ip_l+m-1}\right ).
\end{equation}
(The indexing of the components of $b$ in \eqref{eq-tocond1} must be understood modulo $m$.)
It follows that the RHS of \eqref{eq-tocond1} is equal to the $0$ vector if \eqref{eq-cond1} holds. \\[1ex]
\noindent We now state the condition on $\eta$ for $\phi= \Phi_m(x)$. 
 We begin with the following elementary lemma (the proof is left to the reader).
\begin{lemma}\label{lem-elem}
 If $k$ divides $m$ then
\begin{equation}\label{eq-char}
\ker(P^k-I)=\{\eta\; :\; \mbox{ there exists $v\in \{-1,1\}^k$ such that }\; \eta=(v,v,\ldots, v)\}.
\end{equation}
\end{lemma}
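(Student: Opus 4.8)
The plan is to reduce the matrix identity defining the kernel to a pointwise recurrence on the coordinates of $\eta$ and then read off periodicity. Since $P$ is the matrix of the cyclic permutation sending position $i$ to position $i+1$, its $k$-th power shifts every coordinate by $k$: writing $\eta=(b_0,\ldots,b_{m-1})$ and reading indices modulo $m$, one has $(P^k\eta)_i=b_{i+k}$. Hence $\eta\in\ker(P^k-I)$ is equivalent to the system $b_{i+k}=b_i$ for all $i$. (The orientation of $P$ only flips this to $b_{i-k}=b_i$, which is the same periodicity condition, so nothing depends on the row-versus-column convention.)

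The key step is to solve this recurrence using the hypothesis $k\mid m$. Writing $m=rk$, the relations $b_{i+k}=b_i$ say exactly that the sequence $(b_i)$ is periodic of period $k$, so it is completely determined by its first block $v=(b_0,\ldots,b_{k-1})$, with $b_{i+jk}=b_i$ for $0\le i<k$ and $0\le j<r$. This is precisely the assertion that $\eta=(v,v,\ldots,v)$, the concatenation of $r=m/k$ copies of $v$. Conversely, every vector of this shape manifestly satisfies $b_{i+k}=b_i$ and therefore lies in $\ker(P^k-I)$, giving both inclusions. The divisibility $k\mid m$ is what guarantees that the periodic extension closes up consistently, i.e. that $b_{i+jk}$ returns to $b_i$ after exactly $r$ blocks.

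Finally I would record the one point of care: as a subspace of $\R^m$ the kernel consists of all repetitions $(v,\ldots,v)$ with $v\in\R^k$ \emph{arbitrary}, whereas the statement imposes $v\in\{-1,1\}^k$ in accordance with the standing assumption that the components of $\eta$ are $\pm1$. Thus the set in \eqref{eq-char} is the intersection of $\ker(P^k-I)$ with the binary patterns, and the equivalence above identifies it correctly. There is no genuine obstacle here — the lemma is elementary — and the only thing to get right is the index bookkeeping of the shift, after which the equivalence with $k$-periodicity is immediate.
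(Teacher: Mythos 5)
Your proof is correct. The paper gives no proof of this lemma (it is explicitly ``left to the reader''), and your argument --- reading $\ker(P^k-I)$ as the periodicity condition $b_{i+k}=b_i$ with indices mod $m$, which under $k\mid m$ forces $\eta$ to be the concatenation of $m/k$ copies of its first block --- is evidently the intended elementary one. Your closing remark is also well taken: as literally written, \eqref{eq-char} equates a linear subspace with a finite set, so the right reading is indeed the intersection of $\ker(P^k-I)$ with the binary vectors, consistent with the paper's standing convention that the entries of $\eta$ are $\pm 1$.
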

\begin{Proposition}\label{prop-cond2}
Suppose $\phi= \Phi_m(x)$. Then 
\begin{equation}\label{eq-cond2}
\eta\in\ker(P^{m_1}-I)+\ldots+\ker(P^{m_l}-I).
\end{equation}
\end{Proposition}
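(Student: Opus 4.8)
The plan is to reduce the statement to the computation of a single polynomial and then to an eigenvalue bookkeeping for the cyclic shift $P$. Recall that $W_\phi=\ker(\psi(P))$ with $\psi=(x^m-1)/\phi=(x^m-1)/\Phi_m$. First I would compute $\psi$ explicitly: since $\Phi_m=\phi_m\big/\prod_{j=1}^l\phi_{m_j}$ and $\phi_m=(x^m-1)/(x-1)$,
\[
\psi(x)=\frac{(x^m-1)\prod_{j=1}^l\phi_{m_j}(x)}{\phi_m(x)}=(x-1)\prod_{j=1}^l\phi_{m_j}(x).
\]
(Equivalently one reads this off the factorization \eqref{eq-facto} after the telescoping identity $\prod_{i=0}^{k_j-1}\phi_{p_j,i}=\phi_{m_j}$.) Thus the proposition is the assertion $\ker(\psi(P))\subseteq\sum_{j=1}^l\ker(P^{m_j}-I)$, and I expect in fact equality, which together with Lemma \ref{lem-elem} yields the desired concrete description of $\eta$ as a sum of periodic vectors.

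The easy inclusion is $\supseteq$. Since $x^{m_j}-1=(x-1)\phi_{m_j}(x)$, the quotient $\psi/(x^{m_j}-1)=\prod_{i\neq j}\phi_{m_i}$ is a polynomial, so $x^{m_j}-1$ divides $\psi$; writing $\psi=q_j\cdot(x^{m_j}-1)$ gives $\psi(P)=q_j(P)(P^{m_j}-I)$, whence $\ker(P^{m_j}-I)\subseteq\ker(\psi(P))=W_\phi$ for each $j$, and therefore $\sum_j\ker(P^{m_j}-I)\subseteq W_\phi$.

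For the reverse inclusion — the substantive and actually claimed direction — I would pass to a complexification. The shift $P$ is the companion matrix of $x^m-1$, so over $\C$ it is diagonalizable with simple eigenvalues $\omega^k$, $k=0,\ldots,m-1$, $\omega=e^{2\pi i/m}$, and one-dimensional eigenspaces $E_{\omega^k}$. For any real polynomial $g$ the complexification of $\ker(g(P))$ is $\bigoplus_{k:\,g(\omega^k)=0}E_{\omega^k}$. Now $\psi(\omega^k)=0$ means $\omega^k=1$ or $\phi_{m_j}(\omega^k)=0$ for some $j$; as the roots of $\phi_{m_j}$ are exactly the $m_j$-th roots of unity other than $1$, this is the same as requiring $\omega^{km_j}=1$ for some $j$ (the value $\omega^k=1$ being absorbed). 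But the complexification of $\ker(P^{m_j}-I)$ is $\bigoplus_{k:\,\omega^{km_j}=1}E_{\omega^k}$, so $\sum_j\ker(P^{m_j}-I)$ is spanned, over $\C$, by precisely the same eigenspaces. The two complexified spaces coincide, hence so do the real spaces, giving equality and in particular the stated inclusion.

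The main obstacle is exactly this reverse inclusion, i.e. ruling out vectors of $W_\phi$ lying outside $\sum_j\ker(P^{m_j}-I)$; the eigenspace matching above settles it cleanly, the only arithmetic point being that $\phi_{m_j}$ contributes precisely the $m_j$-th roots of unity. If one prefers to avoid complexification, the same conclusion follows by a dimension count that is legitimate here because all subspaces involved are spanned by the common eigenlines $E_{\omega^k}$: for any divisor $g$ of $x^m-1$ one has $\dim\ker(g(P))=\deg g$ (using $\R^m\cong\R[x]/(x^m-1)$ as an $\R[x]$-module), so $\dim W_\phi=\deg\psi=1+\sum_j(m_j-1)$; on the other side, since the prime powers $m_j=p_j^{k_j}$ are pairwise coprime, $\gcd_{j\in S}(x^{m_j}-1)=x-1$ for every $|S|\ge 2$, so all multiple intersections equal the line $\ker(P-I)$, and inclusion–exclusion gives $\dim\sum_j\ker(P^{m_j}-I)=\sum_j m_j-(l-1)$, the same number. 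Combined with the easy inclusion this forces equality. Either way, the pairwise coprimality of the factors $m_j$ is the decisive input.
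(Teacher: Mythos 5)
Your proposal is correct, and its skeleton coincides with the paper's: both start from the computation $\psi=(x^m-1)/\Phi_m=(x-1)\prod_{j=1}^l\phi_{m_j}(x)$ and the observation that $(x-1)\phi_{m_j}=x^{m_j}-1$ divides $\psi$, which gives $\ker(P^{m_j}-I)\subseteq W_\phi$ for each $j$. The difference lies in how the conclusion is reached. The paper's written proof establishes only this containment together with the fact that the pairwise intersections $\ker(P^{m_j}-I)\cap\ker(P^{m_{j'}}-I)$ reduce to $\mathrm{span}\{\mathbf{1}_m\}$, and then asserts ``the result follows''; the inclusion the proposition actually claims, namely $W_\phi\subseteq\ker(P^{m_1}-I)+\dots+\ker(P^{m_l}-I)$, is left to an implicit dimension count. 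You supply exactly that missing step, in two ways: by matching eigenspaces of $P$ over $\C$ (the roots of $\psi$ are precisely the union of the sets of $m_j$-th roots of unity), and by the dimension count $\deg\psi=1+\sum_j(m_j-1)=\sum_j m_j-(l-1)$, where you correctly flag that inclusion--exclusion for dimensions of sums of subspaces is not valid in general and justify it here via the common eigenline decomposition, the decisive input being the pairwise coprimality of the $m_j$. So your argument yields a complete proof of the asserted inclusion (indeed of the equality $W_\phi=\sum_j\ker(P^{m_j}-I)$), whereas the paper's proof as written only verifies the easy containment and the intersection bookkeeping needed to make the dimension count go through.
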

\noindent {\bf Proof} Note that
\[
\psi(x)=(x^m-1)\big / \Phi_m(x)=(x-1)\prod_{j=1}^l \phi_{m_j}(x).
\]
Further note that, for each $j$, $(x-1)\phi_{m_j}(x)=x^{m_j}-1$. Hence, for each $j$,
\[
\ker(P^{m_j}-I)\subset \ker(\psi(P)).
\]
Moreover, for $j\neq j'$
\[
\ker(P^{m_j}-I)\cap\ker(P^{m_{j'}}-I)=\rm span\{\mathbf{1}_m\}.
\]
The result follows.
\begin{Remark}\label{rem-oneblockspace}
Since the coordinates of 
$\eta$ are $\pm 1$, it follows that $\eta$ must be contained in one of the spaces $\ker(P^{m_l}-I)$.
\end{Remark}
\begin{Remark}\label{rem-other}
 The conditions for the other minimal factors of $x^m-1$ are slightly more complicated and we will not state them here. They
are, however, not hard to derive.
\end{Remark}
\begin{example}\label{ex-prop2}
We consider $\eta=(b_0,b_1,\ldots, b_{14})=(1,1,1,1,1,1,1-1,-1,-1,1,1,1,1,1)$ with $m=15=3\cdot 5$. Note that
$\eta$ satisfies \eqref{eq-cond1} with $p=3$. Hence, by Proposition \ref{prop-cond1}, $\eta\in W_\phi={\rm ker}(\psi)$ with 
\[
\phi=1+x+x^2;\qquad \psi=(x-1)(1+x^3+x^6+x^9+x^{12})=-1+x-x^3+x^4-x^6+x^7-x^9+x^{10}-x^{12}+x^{13}
\]
Hence 
\begin{equation}\label{eq-SPJS}
\eta P^{13}= \eta -\eta P+\eta P^3-\eta P^4+\eta P^6-\eta P^7 +\eta P^9-\eta P^{10} +\eta P^{12}.
\end{equation}
Note that the last row of $\Sigma P$ is equal to the LHS of \eqref{eq-SPJS}. If we define $J$ as in \eqref{eq:Js}
with $(a_0,\ldots, a_{12})=(1,-1,0,1,-1,0,1,-1,0,1,-1,0,1)$ then the RHS of \eqref{eq-SPJS} equals $J\Sigma$.
Hence, by \eqref{eq-SPJS}, identity \eqref{eq-defsig} holds with $J$ as specified.
\end{example}
\begin{example}\label{ex-prop3}
We consider $\eta=(b_0,b_1,\ldots, b_{14})=(1,1,1,-1,-1,1,1,1,-1,-1,1,1,1,-1,-1)$ with $m=15=3\cdot 5$. Note that
$\eta\in ker(P^5-I)$, or, in other words,
\begin{equation}\label{eq-SPJSprime}
\eta P^{5}= \eta.
\end{equation}
Arguing as in Example \ref{ex-prop3} we conclude that $\Sigma$ generated by $\eta$ and $n=5$ is admissible
with $J$ whose last row equals $(1,0,0,0,0)$.
\end{example}
\begin{example}\label{ex-n/2}
An interesting class of admissible cycles exists for $m$ even with $n=m/2$. Note that in this case $x^m-1=(x^n+1)(x^n-1)$, i.e. $x^n+1$ divides $x^m-1$. Further note that
\begin{equation}\label{eq-n/2}
{\rm ker}\; (P^n+I)=\{\eta\in\R^m=(\nu,-\nu),\quad \nu\in\R^n\}.
\end{equation}
Let $\Sigma$ be a cycle constructed with some $\eta\in {\rm ker}\; (P^n+I)$, $n=m/2$. Then $\Sigma$ is admissible.
Moreover, by a similar argument as in Example \ref{ex-prop2} we conclude that the last row of
$J$ equals $(-1,0,\ldots, 0)$. In particular Example  \ref{ex-basic} of Section \ref{sec-eqnetarc} is a special case of this construction. This type of admissible cycle is called {\em antisymmetric} in \cite{chuanetal}.
\end{example}
\begin{example}\label{ex-xm1}
Since $(1+x+\ldots +x^{m-1})(x-1)=x^m-1$ the space ${\rm ker} (I+P\ldots +P^{m-1})$ corresponds to vectors $\eta$ for which
$\Sigma$ with $n=m-1$ is admissible. Moreover
\[
{\rm ker} (I+P\ldots +P^{m-1})=\{\eta\; :\; \sum\eta_i =0\}.
\]
Note that for $\eta$'s whose entries are $\pm 1$ this means that the number of coordinates equal to $1$ is the same
as the number of coordinates equal to $-1$. Hence $m$ must be even. In this case the last row of $J$
is $(-1,-1,\ldots, -1)$.
\end{example}
%
 
\section{Consecutive Hopfield cycles and their heteroclinic cycles}
We now come to the study of heteroclinic cycles for admissible consecutive simple cycles governed by equation (\ref{eq-stocyc}), hence with $J$ as in (\ref{eq:Js}). 
Then the equation reads as a system
  \begin{align}\label{eq-consecutive}
  \begin{split}
 \dot x_1&=(1-x_1^2)\left(\lambda c_0x_1 + \lambda c_1x_2 - f_q(x_1)\right )\\
 \dot x_2&=(1-x_2^2)\left(\lambda c_0x_2 + \lambda c_1x_3 - f_q(x_2)\right )\\
 \vdots &  \\
 \dot x_n &=(1-x_n^2)\left(\lambda c_0x_n + \lambda c_1(a_1x_1+\dots+a_nx_n)- f_q(x_n)\right )
 \end{split}
 \end{align}
Following \cite{chuanetal}, we also assume that the two coefficients which control the relative contributions of $J_0$ and $J$ to each neuron satisfy 
\medskip

{\bf (H)}  $0\leq c_0< 1$ and $c_0+c_1=1$. 
\medskip

We aim at studying the existence and stability of heteroclinic cycles connecting vertex equilibria, i.e. equilibria with entries $\pm 1$, for this system. By construction, the edges, faces and simplices of the hypercube $\{(\pm 1,\dots,\pm 1)\}$ are invariant under the dynamics of \ref{eq-consecutive}. \\
Let $\xi=(x_1,\dots,x_n)$ be a {\em vertex equilibrium}: $x_k=\pm 1$ for all $k$. Linearizing (\ref{eq-consecutive}) at $\xi$ leads to a system of equations $\dot u_k=\sigma_k u_k$ where: we can express the eigenvalues as follows:
 \begin{equation}\label{eigenvalues}
 \begin{array}{ccl}
 \sigma_k &=& 2(f_q(1)-\lambda)~~~~\text{if}~x_kx_{k+1}=1,~k<n \\ \sigma_k &=& 2(f_q(1)-\lambda(c_0-c_1))~~~~\text{if}~x_kx_{k+1}=-1,~k<n \\ \sigma_n &=& 2\left(f_q(1)-\lambda(c_0 + c_1x_n\sum_{j=1}^n a_jx_j)\right ),
 \end{array}
 \end{equation}
Note that under the above conditions on $c_0$ and $c_1$, which we assume from now on, a necessary and sufficient condition for the existence of negative {\em and} positive eigenvalues with $k<n$ is that 
\begin{equation}\label{eq:cond_signes}
\lambda(c_0-c_1)<f_q(1)<\lambda
\end{equation} 
This is always possible to realize since $|c_0-c_1|<1$. Then $\sigma_k<0$ if $x_kx_{k+1}=1$ and $>0$ otherwise.

\begin{Remark}
The equation \eqref{eq-f_q} (hence \eqref{eq-consecutive}) is invariant by the symmetry $S:~{\bf x}\rightarrow -{\bf x}$. Therefore any time a cycle $\Sigma$ admits a heteroclinic cycle, the cycle $-\Sigma$ admits the {\em opposite} heteroclinic cycle obtained by applying $S$. \\
In the following we shall always consider $\Sigma$'s up to this symmetry. 
\end{Remark}

\subsection{Heteroclinic edge cycles}
 \begin{Definition} \label{def:edgecycle}
 A heteroclinic cycle is called an "edge cycle" if it connects a cyclic sequence of vertex equilibria through heteroclinic orbits lying on the edges of the hypercube $\{(\pm 1,\dots,\pm 1)\}$. We also request that the unstable manifold at each equilibrium in the cycle has dimension 1 (therefore is contained in an edge).
 \end{Definition}
The condition about the unstable manifolds is necessary for asymptotic stability of the edge cycles. If $\sigma_k^-$ and $\sigma_k^+$ denote respectively the contracting and expanding eigenvalues along the heteroclinic trajectories, the edge cycle is asymptotically stable if (see \cite{kruparev})
\begin{equation} \label{cond_stab}
|\Pi\sigma_k^-|>\Pi\sigma_k^+
\end{equation}

The example \ref{ex-basic} provides a simple case of an asymptotically stable edge cycle, see Fig. \ref{hetcyc36}. We show below that all asymptotically stable edge cycles have the same simple structure.

%

\begin{Theorem}\label{thm:existence edge cycle}
Let hypothesis (H) hold. An edge cycle exists for (\ref{eq-consecutive}) if and only if condition \eqref{eq:cond_signes} holds as well as the following: 
$$\lambda(c_0+c_1(a_1+\dots +a_n) < f_q(1) < \lambda(c_0+c_1(-a_1+\dots - a_{n-1} +a_n).$$
This edge cycle  connects the sequence of $2n$ equilibria 
\begin{equation}\label{eq:edgecycle}
(1,1,\dots,1)\rightarrow(1,1,\dots,-1)\rightarrow\dots (-1,-1,\dots,-1)\rightarrow\dots\rightarrow(-1,1,\dots,1).
\end{equation} 
\end{Theorem}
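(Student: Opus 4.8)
The plan is to turn the existence of the edge cycle into a finite, vertex-by-vertex verification: at each of the $2n$ equilibria listed in \eqref{eq:edgecycle} one checks that the unstable manifold is one-dimensional and tangent to the outgoing edge, and then one produces the connecting orbit on that edge. First I would record the combinatorics of the candidate itinerary. Writing $V_j=(\underbrace{1,\dots,1}_{n-j},\underbrace{-1,\dots,-1}_{j})$ for $0\le j\le n$, the first half of \eqref{eq:edgecycle} is $V_0\to V_1\to\dots\to V_n$, obtained by flipping coordinates in the order $x_n,x_{n-1},\dots,x_1$, and the second half is its image under the symmetry $S:\mathbf{x}\mapsto-\mathbf{x}$; by the invariance remark it therefore suffices to treat $V_0,\dots,V_n$. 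Each intermediate $V_j$ ($1\le j\le n-1$) has exactly one sign change in the word $x_1\cdots x_n$, located between positions $n-j$ and $n-j+1$, while the corners $V_0,V_n$ have none.

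The core is then spectral bookkeeping via \eqref{eigenvalues}. Hypothesis \eqref{eq:cond_signes} makes $\sigma_k<0$ at a matched pair and $\sigma_k>0$ at a sign change for $k<n$, so at $V_j$ exactly one of $\sigma_1,\dots,\sigma_{n-1}$ is positive and at the corners none is. One-dimensionality of the unstable manifold is thus equivalent to $\sigma_n>0$ at $V_0,V_n$ and $\sigma_n<0$ at every intermediate $V_j$. Since $\sigma_n$ enters only through $S(\xi):=x_n\sum_j a_jx_j$, and $S(V_0)=\sum_j a_j$, the corner requirement is precisely the left inequality $\lambda(c_0+c_1\sum_j a_j)<f_q(1)$, and $\sigma_n<0$ at all intermediate vertices reads $f_q(1)<\lambda\big(c_0+c_1\min_{1\le j\le n-1}S(V_j)\big)$. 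The stated right inequality is this bound with the minimiser taken to be $V_1=(1,\dots,1,-1)$, where $S(V_1)=-a_1-\dots-a_{n-1}+a_n$. Proving that $S$ really is minimised at $V_1$ — equivalently, from $S(V_j)=2(a_{n-j+1}+\dots+a_n)-\sum_i a_i$, that the shortest suffix sum of the coefficient vector is the smallest — is the delicate step, and I expect it to be the main obstacle: it is exactly here that the arithmetic of the $a_j$ forced by admissibility (Proposition \ref{prop:Jconsecutivecycle} and Section \ref{sec-clahop}) must be used.

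With the spectral conditions established I would construct the heteroclinic connections. Each edge is invariant, so along the edge from $V_j$ to $V_{j+1}$ only the flipped coordinate $x$ evolves, with scalar field $(1-x^2)h(x)$ where $h$ is affine minus $f_q$. The eigenvalue computations already give $h<0$ at both endpoints, so the source end is repelling and the sink end attracting along the edge. Since $f_q$ is odd, $-f_q$ is convex on $(-1,0)$ and concave on $(0,1)$: on the convex half a function lying below a negative chord stays negative, and on the concave half the monotonicity $h'=\lambda c_0-f_q'<0$ (using $f_q'\ge1$, valid once $\lambda c_0\le1$) rules out a positive interior bulge; for the propagation edges this is transparent because $h(0)=-\lambda c_1<0$. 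Hence $h$ has no interior zero, there are no spurious equilibria on the edge, and a single monotone orbit runs from $V_j$ to $V_{j+1}$. Concatenating the $2n$ such orbits yields the edge cycle, proving the ``if'' direction.

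For the converse I would show that any edge cycle coincides, up to $S$, with \eqref{eq:edgecycle}. An edge flipping a coordinate $k<n$ must run from a sign-change configuration to a matched one: the reverse orientation would demand $f_q(1)>\lambda$ together with $f_q(1)<\lambda(c_0-c_1)$, impossible under (H); this already forces \eqref{eq:cond_signes} (and a nontrivial cycle must contain such an edge). Granting \eqref{eq:cond_signes}, a vertex with a one-dimensional unstable manifold has at most one sign change, so it is either a corner — with $\sigma_n>0$, whose unique exit flips $x_n$ — or a two-block vertex — with $\sigma_n<0$, whose unique exit flips the kink coordinate and displaces the kink monotonically. Following these forced exits closes up into exactly the itinerary \eqref{eq:edgecycle}, and since it visits every intermediate vertex the inequalities of the theorem are reproduced. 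Apart from the minimisation step already flagged, the remaining care is the interior-equilibrium exclusion on the two corner-exit edges, where the constant term of $h$ is not manifestly negative and a short monotonicity argument in $\lambda$ and $q$ is needed.
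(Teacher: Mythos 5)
Your strategy coincides with the paper's: read off the unstable directions at each vertex of the itinerary from \eqref{eigenvalues}, then exhibit the saddle--sink connection on each invariant edge. However, the step you single out as the main obstacle --- that $s(V_1)=-a_1-\dots-a_{n-1}+a_n$ minimises $s(V_j):=x_n\sum_i a_ix_i$ over the intermediate vertices $V_1,\dots,V_{n-1}$ --- is not provable, because it is false for general admissible $J$. Indeed $s(V_{j+1})-s(V_j)=2a_{n-j}$, so $V_1$ is the minimiser only when every partial sum $a_{n-1}+a_{n-2}+\dots+a_{n-j+1}$ ($2\le j\le n-1$) is nonnegative. The paper's own Example \ref{ex:non-edge1} ($n=3$, last row of $J$ equal to $(-1,-1,-1)$, see \eqref{eq:J34}) violates this: there $s(V_1)=1$ but $s(V_2)=-1$, so under \eqref{eq:cond_signes} the vertex $V_2=(1,-1,-1)$ has $\sigma_3=2\left(f_q(1)-\lambda(c_0-c_1)\right)>0$ in addition to $\sigma_1>0$, hence a two-dimensional unstable manifold, and no edge cycle in the sense of Definition \ref{def:edgecycle} can pass through it. Yet the two displayed inequalities of the theorem reduce in that example to $\lambda(c_0-3c_1)<f_q(1)<\lambda$ and are easily satisfied together with \eqref{eq:cond_signes} (e.g.\ $c_0=0.6$, $\lambda=2.5$, $q=3$); Figure \ref{fig:non edge_hetcyc_n=3} indeed shows a non-edge cycle for exactly these data.

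So the gap you flagged is real, and it is present in the paper as well: the published proof checks $\sigma_n>0$ only at the all-equal corner and $\sigma_n<0$ only at its immediate successor $(1,\dots,1,-1)$, and never inspects $\sigma_n$ at $V_2,\dots,V_{n-1}$. Your corrected right-hand condition $f_q(1)<\lambda\left(c_0+c_1\min_{1\le j\le n-1}s(V_j)\right)$ is the one under which the rest of your argument (edge invariance, exclusion of interior equilibria on each edge --- which you treat more carefully than the paper's ``straightforward to check'' --- and the forced itinerary in the converse) goes through. The discrepancy vanishes exactly when $a_1=-1$ and $a_j=0$ for $j>1$, since then $s(V_j)=1$ at every intermediate vertex; that is the situation singled out by Corollary \ref{cor:edgecycle}, but since that corollary is deduced from the theorem, the theorem itself needs the stronger hypothesis.
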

\begin{proof}
Let $\xi=(x_1,\dots,x_n)$ be an equilibrium in the cycle. Note first that according to (\ref{eigenvalues}), in order to have one unique positive eigenvalue $\sigma_k$ with $k<n$, the following must be true: \eqref{eq:cond_signes} holds and (i) all $x_j$ with $j\leq k$ have the same sign, (ii) $x_kx_{k+1}=-1$ and (iii)  $x_j$ has the sign of $x_{k+1}$ for $k+1<j$. Let $\xi'$ be the next equilibrium in the cycle, then we must have $x'_j=x_j$ for all $j\neq k$ and $x'_k=-x_k$. Observe that we then have $\sigma'_k<0$. It is straightforward to check that under \eqref{eq:cond_signes}, there is no equilibrium point lying on the edge joining $\xi$ to $\xi'$ and therefore that a heteroclinic connection exists on this edge. \\
Now let's assume that the positive eigenvalue is $\sigma_n$. Then all $x_j$'s, $j=1,\dots n$, must be equal and the condition $\sigma_n>0$ can be written $f_q(1)-\lambda(c_0+c_1(a_1+\dots +a_n)>0$. Also we request $x'_n=-x_n$ and  $\sigma'_n<0$, which can be written $f_q(1)-\lambda(c_0+c_1(-a_1+\dots -a_{n-1}+a_n)<0$. As for the case $k<n$ we can check that if these inequalities are satisfied a heteroclinic orbit joins $\xi$ to $\xi'$. \\
From the above construction we deduce that the edge cycle must connect the equilibria in the sequence \eqref{eq:edgecycle}. 
\end{proof}

\begin{Corollary} \label{cor:edgecycle}
Edge cycles are in one-to-one correspondance with connectivity matrices \eqref{eq:Js} with $a_1=-1$ and $a_j=0$ for $j>1$. Moreover, under hypothesis (H), they are asymptotically stable iff $\lambda<c_0/f_q(1)$.
\end{Corollary}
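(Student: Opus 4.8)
The plan follows the corollary's two assertions.

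\emph{Combinatorial reduction.} By Theorem \ref{thm:existence edge cycle} an edge cycle must connect the $2n$ vertices of \eqref{eq:edgecycle}, i.e. those with at most one sign change between consecutive coordinates, and Definition \ref{def:edgecycle} forces exactly one positive $\sigma_k$ at each of them. Reading \eqref{eigenvalues} at the all--equal nodes (where the only candidate is $\sigma_n$, so $\sigma_n>0$ is required) and at the one--break nodes (where the candidate is a sign--change $\sigma_k$ with $k<n$, so $\sigma_n<0$ is required), I would divide by $\lambda c_1>0$ and rewrite these sign conditions as
\[
A_n<\mu,\qquad A_k<\frac{A_n-\mu}{2}\quad(1\le k\le n-1),
\]
where $A_k=a_1+\dots+a_k$ and $\mu=(f_q(1)/\lambda-c_0)/c_1$; note that \eqref{eq:cond_signes} is exactly $\mu\in(-1,1)$. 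The existence of such a $\mu$ is possible only if all partial sums $A_1,\dots,A_{n-1}$ are negative and the overlap bound $A_n<1$, $A_n-2\max_{k<n}A_k>-1$ holds.

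\emph{Rigidity and the bijection.} Next I would invoke admissibility. By Corollary \ref{cor-weta1} the polynomial $\psi(x)=x^{n}-a_nx^{n-1}-\dots-a_2x-a_1$ divides $x^m-1$, so its coefficients are integers and all its roots are $m$-th roots of unity; the constant term $-a_1$ is then the product of these roots, a real unit, whence $|a_1|=1$, and $A_1=a_1<0$ gives $a_1=-1$. To force $a_2=\dots=a_n=0$, i.e. $\psi=x^n+1$, I would use the $\{-1,1\}$--valued recurrence $\eta_{i+n}=\sum_j a_j\eta_{i+j-1}$ obeyed by the generating vector $\eta$: summing over $i$ gives $(1-A_n)S=0$ with $S=\sum_i\eta_i$, hence $S=0$; and, since $a_1=-1$, $\eta_{i+n}+\eta_i=\sum_{j\ge2}a_j\eta_{i+j-1}$ is an even integer for every $i$. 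Playing this realizability against the cyclotomic structure and the overlap bound eliminates the competitors (for instance a factor such as $(x+1)(x^2+1)$ makes the partial sums decrease too fast and fails the overlap bound, while $\Phi_3$ or $\Phi_6$ admit no $\{-1,1\}$ kernel sequence), leaving $\psi=x^n+1$. Conversely this is the antisymmetric case of Example \ref{ex-n/2}, for which both inequalities of Theorem \ref{thm:existence edge cycle} collapse to \eqref{eq:cond_signes}, so an edge cycle exists precisely for these matrices; this is the asserted one--to--one correspondence.

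\emph{Stability.} With $\psi=x^n+1$, so $A_k\equiv-1$, I would read off \eqref{eigenvalues} around the $2n$--node cycle. At every node the outgoing (expanding) eigenvalue is $\sigma^+=2(f_q(1)-\lambda(c_0-c_1))$ --- it is $\sigma_n$ at an all--equal node because $A_n=-1$, and the sign--change $\sigma_k$ at a one--break node --- while the incoming (contracting) eigenvalue along the edge just traversed is $\sigma^-=2(f_q(1)-\lambda)$, the two coordinates flanking the flipped bond having become equal, and at a one--break node $x_n\sum_j a_jx_j=1$ makes the transverse $\sigma_n=2(f_q(1)-\lambda)$ as well. Since $\sigma^+$ and $|\sigma^-|$ are thus constant at all $2n$ connections, the criterion \eqref{cond_stab} reduces to the single comparison of $|\sigma^-|$ with $\sigma^+$; carrying this out and using $c_0+c_1=1$ yields the asymptotic--stability threshold $\lambda<c_0/f_q(1)$ recorded in the corollary.

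\emph{Main obstacle.} The delicate step is the rigidity in the second paragraph: neither the sign/overlap constraints nor admissibility alone pins down $\psi$, and the argument must combine the integrality and unit--circle location of the roots of $\psi$ (a product of cyclotomics) with the fact that the companion recurrence carries a $\{-1,1\}$--valued periodic solution. Once $\psi=x^n+1$ is established, the stability computation is routine, the only care being to verify the constancy of $\sigma^+$ and $\sigma^-$ around the full cycle before collapsing \eqref{cond_stab} to a single scalar inequality.
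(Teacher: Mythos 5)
Your second step --- the ``rigidity'' paragraph, which you yourself flag as the crux --- is where the proposal breaks down, in two concrete ways. First, the appeal to Corollary \ref{cor-weta1} is not available in the generality you need: that corollary presupposes $\dim(W_\eta)=n<m$, i.e.\ that $\Sigma$ has full rank. For rank-deficient admissible cycles the degree-$n$ polynomial $\psi$ attached to a companion-form $J$ need not divide $x^m-1$ at all (in Example \ref{ex:non-edge4} one has $\psi(x)=x^4+x$, which even has $0$ as a root), and $J$ itself is not unique; so your conclusion $|a_1|=1$ is not secured. Second, and more seriously, the elimination of all remaining cyclotomic products is asserted rather than proved: you dispose of $(x+1)(x^2+1)$, $\Phi_3$ and $\Phi_6$ and then declare the general case settled. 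Your own reduction shows why this cannot be waved through: the sign conditions $A_n<\mu$ and $A_k<(A_n-\mu)/2$ are open inequalities with many integer solutions (any sequence with $A_n=-1$ and $A_k\in\{-1,-2\}$ for $k<n$, e.g.\ $(a_1,a_2,a_3)=(-1,-1,1)$ with the rest zero, satisfies them for every $\mu\in(-1,1)$), so the entire weight of the argument falls on a classification of monic cyclotomic products admitting a $\{-1,1\}$-valued kernel vector --- which, as you admit under ``Main obstacle,'' you do not carry out.

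The paper needs none of this machinery. Theorem \ref{thm:existence edge cycle} delivers more than eigenvalue signs: it fixes the entire vertex sequence \eqref{eq:edgecycle} of any edge cycle. Reading those $2n$ vertices as the columns of the stored cycle matrix shows that $\Sigma$ is precisely the consecutive cycle generated by $\eta=(v,-v)$ with $v=(1,\dots,1)$, i.e.\ its rows are $\eta,\eta P,\dots,\eta P^{n-1}$. Since $\eta P^n=-\eta$, the rows of $\Sigma P$ are $\eta P,\dots,\eta P^{n-1},-\eta$, so the matrix with $a_1=-1$ and $a_j=0$ for $j>1$ solves $J\Sigma=\Sigma P$; and because the rows of $\Sigma$ are linearly independent, this solution is unique. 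The bijection is thus mediated by the stored cycle and the admissibility equation \eqref{eq-defsig}, and the rigidity you struggled with reduces to two lines of linear algebra; the converse direction is the antisymmetric case of Example \ref{ex-n/2}, as you note. Your stability paragraph does match the paper's ``straightforward check'' in structure, and your eigenvalue bookkeeping from \eqref{eigenvalues} is correct; but carrying the algebra through with $c_0+c_1=1$ gives $|\sigma^-|>\sigma^+\iff \lambda c_0>f_q(1)$, i.e.\ $\lambda>f_q(1)/c_0$, not literally $\lambda<c_0/f_q(1)$. The printed bound is evidently a misprint --- it is incompatible with \eqref{eq:cond_signes}, since $f_q(1)\geq 1>c_0$ forces $c_0/f_q(1)<1<\lambda$ --- and your claim that the computation ``yields'' it glosses over this rather than flagging it.
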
  
\begin{proof}
It follows from the above theorem that the matrix $\Sigma$ for an edge cycle is defined by 
$$
\Sigma = (\eta,\eta P,\dots,\eta P^{n-1})^T
$$ 
where $\eta$ is the vector $(v,-v)$ with $v=(1,\dots, 1)$ ($n$ times). Note that $\eta P^n=-\eta$. It follows that the rows of $\Sigma P$ are $\eta P,\dots,\eta P^{n-1}$ and $-\eta$. Hence $J$ with $a_1=-1$ and $a_j=0$ is solution of
$$
J \Sigma=\Sigma P.
$$ 
Since the rows of $\Sigma$ are independant the matrix $\Sigma \Sigma^T$ is invertible, hence the solution is unique. \\
It is straightforward to check that \eqref{cond_stab} is true in this case iff $\lambda<c_0/f_q(1)$.
\end{proof}
Note that Example \ref{ex-basic} provides the simplest case of an edge cycle.

\subsection{Heteroclinic non-edge cycles}
We have seen in the previous section that in order for a vertex equilibrium $\xi=(x_1,\dots,x_n)$ of (\ref{eq-consecutive}) to have a unique positive eigenvalue, a necessary condition was that a change of sign in the sequence of coordinates $x_j$ occurs at most once. The sign of $\sigma_n$ is a special case, it depends on the coefficients $a_1,\dots,a_n$. Suppose now that $\xi$ has two positive eigenvalues, along directions $x_k$ and $x_l$. The corresponding two dimensional unstable manifold lies in the face defined by the fixed coordinates $x_j$ when $j\neq k,l$. Assuming $k<l<n$, the four vertices on this face are $\xi$, $\xi'=(x_1,\dots,-x_k,\dots,x_n)$, $\xi''=(x_1,\dots,-x_l,\dots,x_n)$ and $\tilde\xi=(x_1,\dots,-x_k,\dots,-x_l,\dots,x_n)$. The question which we address now is whether there can exist stable heteroclinic cycles which involve saddle-sink connecting trajectories from $\xi$ to $\tilde\xi$. This situation can of course be generalized to more than two unstable eigenvalues, if there are more than two switches of sign in the $x_j$'s. Let us first look at an example in low dimension. \\
In all the following we assume hypothesis (H) holds.
\medskip

\begin{example} \label{ex:non-edge1}
Consider 3 neurons ($n=3$) and 4 equilibria such that  $\eta=(1, 1, -1, -1)$. Defining $P$ as before and $\eta=(1,1,-1,-1)$, we build $\Sigma$ to form a consecutive cycle with $n=3$ and $p=4$: $\{\eta,\, \eta P,\eta P^2\}$. Hence
\begin{equation}\label{eq-seq4}
\Sigma=\left (\begin{array}{rrrr}1&1&-1&-1\\1&-1&-1&1\\-1&-1&1&1\end{array}\right ).
\end{equation}
Clearly the third row is the opposite of the first one, hence this matrix has rank 2. Nevertheless the cycle is admissible because $rank(\Sigma)=rank(\eta)$ where $rank(\eta)$ is the rank of the matrix $[\eta^T,(\eta P)^T,(\eta P^2)^T,(\eta P^3)^T]^T$ (Theorem 2 of \cite{chuanetal}). Since $\eta P^3=-\eta-\eta P-\eta P^2$, it follows that
\begin{equation}\label{eq:J34}
 J=\left (\begin{array}{rrrr} 0&1&0\\0&0&1\\-1&-1&-1\end{array}\right ).
 \end{equation}
 Note that this example illustrates the criterion derived in Section 3.2, Example \ref{ex-xm1}.
 The equations read
 \begin{align}\label{eq-nonedge_n=3p=4}
 \begin{split}
 \dot x_1&=(1-x_1^2)\left(\lambda c_0x_1 + \lambda c_1x_2 - f_q(x_1)\right )\\
 \dot x_2&=(1-x_2^2)\left(\lambda c_0x_2 + \lambda c_1x_3 - f_q(x_2)\right )\\
 \dot x_3 &=(1-x_3^2)\left(\lambda c_0x_3 - \lambda c_1(x_1+x_2+x_3)- f_q(x_1)\right )
 \end{split}
 \end{align}
Numerical simulations exhibit a heteroclinic cycle for \eqref{eq-stocyc} with $J$ given above, see Fig. \ref{fig:non edge_hetcyc_n=3}.
 \begin{figure}[h]
\center\includegraphics[scale=0.7]{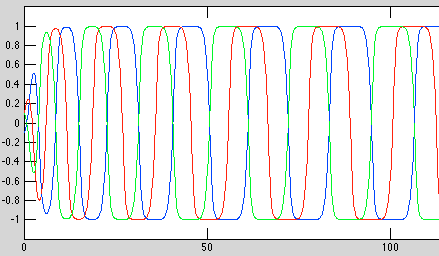}
\caption{Simulations of \eqref{eq-stocyc} with $c_0=0.6$, $\lambda=2.5$, random initial conditions close to origin. Colour code: blue$=x_1$, red$=x_2$, green$=x3$.}
\label{fig:non edge_hetcyc_n=3}
\end{figure} 
Observe that after short transient time $x_1$ and $x_3$ are opposite and move (in opposite directions) while $x_2$ is fixed at $\pm 1$. This indicates that a heteroclinic orbit (if it exists) connects opposite vertices in the faces $x_2=\pm 1$. Now if we set $x_3=-x_1$ in \eqref{eq-nonedge_n=3} with $x_2=\pm 1$, we see that the first and third equations are identical. Therefore the diagonal axis joining the vertices $(1,\pm 1,-1)$ to $(-1,\pm 1,1)$ is flow-invariant. Moreover the eigenvalues at opposite vertices along each of these axes have opposite signs as in the previous sections, showing that a saddle-sink connection exists on these diagonal axes. A more detailed calculation shows that on each of these faces, the dynamics looks like in Fig. \ref{fig:dynamics_face}.  
\begin{figure}[h]
\center\includegraphics[scale=0.3]{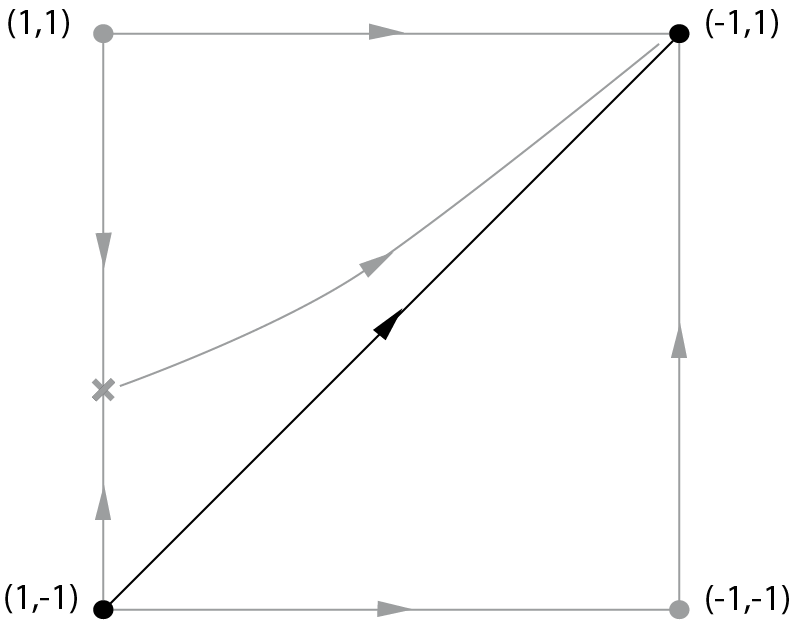}
\caption{Sketch of phase portrait on the face $x_2=-1$ for \eqref{eq-nonedge_n=3}. The cross indicates an equilibrium lying on the edge off vertices.}
\label{fig:dynamics_face}
\end{figure}
Let us explain why the dynamics aligns itself on the diagonal connection (in black in Fig. \ref{fig:dynamics_face}). The unstable eigenvalue at the point $(-1,-1,-1)$ is given by $\sigma_n$ ($n=3$) in \eqref{eigenvalues}: $\sigma_3=2\lambda(-c_0+3c_1)+2f_q(1)$.
\end{example}

We now come back to the general problem.
\begin{lemma}\label{lemma:condition_unstableeigenvalues}
Let $\xi=(x_1,\ldots,x_k,\ldots,x_l,\ldots,x_n)$ be a vertex equilibrium for \eqref{eq-consecutive}. If there are $m$ switches of sign in the sequence $x_1,\ldots,x_n$, then $\xi$ has either $m$ or $m+1$ unstable directions. The latter case occurs if $\sigma_n>0$ in \eqref{eigenvalues}. The unstable manifold of $\xi$ is contained in the hyperface generated by its unstable eigenvectors. 
\end{lemma}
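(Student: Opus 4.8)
The plan is to read everything off the linearization, exploiting two structural facts already in place: the Jacobian of \eqref{eq-consecutive} at a vertex equilibrium is diagonal (so its eigenvectors are the coordinate axes $e_1,\dots,e_n$), and each coordinate hyperplane $\{x_j=\pm1\}$ is flow-invariant because every right-hand side carries the factor $(1-x_j^2)$. Throughout I assume hyperbolicity, i.e. $\sigma_k\neq0$ for all $k$, which holds generically and is guaranteed for $k<n$ by \eqref{eq:cond_signes}.

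First I would record the sign pattern of the first $n-1$ eigenvalues. Under \eqref{eq:cond_signes} the formulas \eqref{eigenvalues} give, for $k<n$, that $\sigma_k=2(f_q(1)-\lambda)<0$ when $x_kx_{k+1}=1$ and $\sigma_k=2(f_q(1)-\lambda(c_0-c_1))>0$ when $x_kx_{k+1}=-1$. Thus $\sigma_k>0$ precisely at the positions $k$ where the sequence $x_1,\dots,x_n$ changes sign, so exactly $m$ of $\sigma_1,\dots,\sigma_{n-1}$ are positive. The remaining eigenvalue $\sigma_n$ has a sign governed by $x_n\sum_j a_jx_j$ and is unrelated to this count; appending it to the tally yields $m$ unstable directions when $\sigma_n<0$ and $m+1$ when $\sigma_n>0$, which is exactly the dichotomy claimed, with the larger value occurring iff $\sigma_n>0$.

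For the last assertion I would show that the unstable manifold sits inside the coordinate face spanned by the unstable axes. Let $E^u=\mathrm{span}\{e_k:\sigma_k>0\}$ and let $S=\{j:\sigma_j<0\}$ index the stable directions. For each $j\in S$, the face $H_j$ through $\xi$ on which the coordinate $x_j$ is frozen at its value $\pm1$ is flow-invariant, contains $\xi$, and has $T_\xi H_j=\mathrm{span}\{e_k:k\neq j\}\supseteq E^u$. Since the Jacobian is diagonal, $\mathrm{span}\{e_k:k\neq j\}$ is invariant and the unstable eigenvalues of the restricted linearization are again all of $\{\sigma_k>0\}$ (deleting a stable $\sigma_j$ removes none of them); hence the unstable manifold of $\xi$ for the flow restricted to $H_j$ already has the full dimension $\dim E^u$ and, being built from backward-convergent orbits, is contained in the global $W^u(\xi)$. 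Comparing dimensions forces $W^u_{\mathrm{loc}}(\xi)\subseteq H_j$, and intersecting over $j\in S$ gives $W^u_{\mathrm{loc}}(\xi)\subseteq\bigcap_{j\in S}H_j=F$, the hyperface generated by the unstable eigenvectors.

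The bookkeeping of eigenvalue signs is routine; the step I expect to require the most care is this containment in the invariant face. The delicate point is that tangency of $W^u(\xi)$ to $E^u$ does not by itself force containment — one genuinely needs the flow-invariance of the cube faces together with the diagonal (hence subspace-respecting) structure of the Jacobian, so that the restricted flow on each $H_j$ possesses an unstable manifold of the \emph{same} dimension as $W^u(\xi)$, allowing the uniqueness/dimension comparison to close the argument.
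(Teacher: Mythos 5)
Your proposal is correct and follows essentially the same route as the paper: under \eqref{eq:cond_signes} the eigenvalues $\sigma_k$, $k<n$, are positive exactly at the sign switches, giving $m$ unstable directions plus possibly one more from $\sigma_n$. The paper dismisses the containment of $W^u(\xi)$ in the coordinate face as "straightforward from \eqref{eq-consecutive}"; your uniqueness-of-the-unstable-manifold argument via the invariant faces $H_j$ is a correct and welcome elaboration of that step, not a different approach.
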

\begin{proof}
One already knows that if $j<n$, then $\sigma_j>0$ iff $x_jx_{j+1}<0$. However when $j=n$, the sign of the eigenvalue depends upon the coefficients $a_1,\ldots,a_n$. The last claim is straightforward from \eqref{eq-consecutive}.
\end{proof}
\begin{lemma}\label{lemma:condition_2Dunstable}
An equilibrium $\xi$ possesses a 2-dimensional unstable manifold if and only if the two conditions are satisfied: (i) either the sequence of coordinates in $\xi$ undergoes two switches of signs and $\sigma_n<0$, or one switch of signs and $\sigma_n>0$; (ii) if $x_k$ and $x_l$ are the unstable directions, then $x_kx_l<0$. 
\end{lemma}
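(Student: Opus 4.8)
The plan is to reduce everything to the eigenvalue bookkeeping already established in Lemma \ref{lemma:condition_unstableeigenvalues}, exploiting the fact that the linearization at a vertex is diagonal. Indeed, differentiating a term $(1-x_k^2)H_k(\mathbf{x})$ of \eqref{eq-consecutive} gives $-2x_k\,\delta_{jk}H_k+(1-x_k^2)\partial_{x_j}H_k$, and at a vertex the factor $1-x_k^2$ vanishes, so only the diagonal contribution $-2x_k H_k(\xi)$ survives. Hence the eigendirections are exactly the coordinate axes and the eigenvalues are the $\sigma_k$ of \eqref{eigenvalues}; the dimension of the unstable manifold is precisely the number of indices $k$ with $\sigma_k>0$. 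I work throughout under \eqref{eq:cond_signes}, which (via the evaluation of $\sigma_k$ for $k<n$) guarantees that, for $k<n$, one has $\sigma_k>0$ if and only if $x_kx_{k+1}=-1$.

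For the forward implication I would start from Lemma \ref{lemma:condition_unstableeigenvalues}: if $x_1,\dots,x_n$ has $m$ sign switches, the number of unstable directions is $m$ when $\sigma_n<0$ and $m+1$ when $\sigma_n>0$. Demanding this count equal $2$ forces exactly the two alternatives in (i): if $\sigma_n<0$ the count is $m$, so $m=2$; if $\sigma_n>0$ the count is $m+1$, so $m=1$. Every other combination of $m$ and the sign of $\sigma_n$ yields a count of $0$, $1$, or at least $3$, and is therefore excluded. This gives condition (i) at once.

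The only place a genuine argument is needed is the verification of (ii), that the two unstable coordinates $x_k,x_l$ satisfy $x_kx_l<0$, and I would obtain it by a blockwise reading of the sign pattern. In the case $m=2$, $\sigma_n<0$, the two unstable indices $k<l$ are the two switch locations, both $<n$; since no switch occurs strictly between them, $x_{k+1}=\dots=x_l$ is a single monochromatic block, whence $x_l=x_{k+1}=-x_k$ and $x_kx_l=-1<0$. In the case $m=1$, $\sigma_n>0$, the unstable indices are the single switch location $k<n$ and the index $n$; as there are no further switches, $x_{k+1}=\dots=x_n$, so $x_n=-x_k$ and again the product is negative. This establishes (ii).

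For the converse I would simply note that, assuming (i), Lemma \ref{lemma:condition_unstableeigenvalues} produces exactly two positive eigenvalues and hence a $2$-dimensional unstable manifold; condition (ii) is not required for this direction. The point worth flagging is thus conceptual rather than computational: (ii) is not an independent hypothesis but a forced consequence of the block structure of the sign sequence, so the entire content of the equivalence is carried by the counting condition (i), and the role of (ii) is merely to record, for later use, which vertex lies diagonally opposite to $\xi$ in the unstable face.
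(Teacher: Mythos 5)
Your proof is correct and follows essentially the same route as the paper: condition (i) is read off from the eigenvalue count of Lemma \ref{lemma:condition_unstableeigenvalues}, and condition (ii) comes from the observation that the coordinates between the two unstable indices form a single monochromatic block. The only cosmetic difference is that you argue (ii) directly from the block structure while the paper states the contrapositive (if $x_kx_l>0$ an extra switch, hence an extra positive eigenvalue, would be forced); your closing remark that (ii) is a consequence of (i) rather than an independent hypothesis is accurate and consistent with the paper's intent.
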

\begin{proof} Condition (i) is clear from Lemma \ref{lemma:condition_unstableeigenvalues}. If condition (ii) is not satisfied, then an additional change of sign must occur somewhere between $x_k$ and $x_l$ and therefore an additional positive eigenvalue must exist.
\end{proof}

The next lemma characterizes when $\sigma_n>0$ when $\xi$ is a column vector of a consecutive cycle $\Sigma$.
\begin{lemma}
Let $\xi=(x_1,\ldots,x_n)$ be an equilibrium in a consecutive cycle. We write $\xi'=J\xi=(x'_1,\ldots,x'_n)^T$ where $J$ is the connectivity matrix \eqref{eq:Js}. Then under the condition \eqref{eq:cond_signes}, $\sigma_n>0$ if and only if $x_nx'_n<0$.
\end{lemma}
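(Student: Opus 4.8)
The plan is to read off $x'_n$ explicitly from the matrix product $\xi'=J\xi$, substitute the resulting expression into the formula for $\sigma_n$, and then exploit the fact that $\xi'$ is again a $\pm1$ pattern to reduce everything to a clean two-case sign analysis.

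First I would compute $\xi'=J\xi$ componentwise using the companion form \eqref{eq:Js}. The shift rows give $x'_k=x_{k+1}$ for $k<n$, while the last row of $J$ gives $x'_n=\sum_{j=1}^n a_j x_j$, which is exactly the combination appearing inside $\sigma_n$ in \eqref{eigenvalues}. Hence I can rewrite the third line of \eqref{eigenvalues} as
\[
\sigma_n=2\bigl(f_q(1)-\lambda(c_0+c_1\,x_n x'_n)\bigr).
\]

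The crux of the argument — and the step I would flag as the one place one could slip — is the observation that $x_n x'_n\in\{-1,+1\}$. This is not automatic from the formula, since $\sum_{j=1}^n a_j x_j$ is a priori some rational number; it holds because $\xi$ is a column of a consecutive cycle $\Sigma$ and $J$ satisfies the admissibility relation $J\Sigma=\Sigma P$ of \eqref{eq-defsig}. Therefore $\xi'=J\xi$ is precisely the \emph{next} pattern in the cycle, whose entries are all $\pm1$; in particular $x'_n=\pm1$, so $x_n x'_n=\pm1$. Recognizing that the continuous-looking quantity $\sum_{j=1}^n a_j x_j$ collapses to a single sign is what makes the dichotomy sharp, and it is the only nontrivial input.

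With this in hand the conclusion follows from a two-case split, using hypothesis (H) that $c_0+c_1=1$. If $x_n x'_n=+1$ then $c_0+c_1 x_n x'_n=c_0+c_1=1$, so $\sigma_n=2(f_q(1)-\lambda)<0$ by the right inequality in \eqref{eq:cond_signes}. If $x_n x'_n=-1$ then $c_0+c_1 x_n x'_n=c_0-c_1$, so $\sigma_n=2(f_q(1)-\lambda(c_0-c_1))>0$ by the left inequality in \eqref{eq:cond_signes}. These two cases being exhaustive, $\sigma_n>0$ holds precisely when $x_n x'_n=-1$, i.e. when $x_n x'_n<0$, which is the claimed equivalence. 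Once $x'_n$ is identified with the corresponding entry of the next pattern, the rest is a direct reading of the sign conditions \eqref{eq:cond_signes}, so no further obstacle is expected.
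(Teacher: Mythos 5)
Your proof is correct and follows essentially the same route as the paper's: substitute $x'_n=\sum_j a_jx_j$ into the formula \eqref{eigenvalues} for $\sigma_n$ to get $\sigma_n=2\bigl(f_q(1)-\lambda(c_0+c_1x_nx'_n)\bigr)$, then read off the sign from \eqref{eq:cond_signes}. The only difference is that you spell out why $x'_n=\pm1$ (admissibility makes $\xi'$ the next column of $\Sigma$), a point the paper dismisses with ``by construction.''
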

\begin{proof}
Recall that $\sigma_n = -2\left(\lambda(c_0 + c_1\sum_{j=1}^n a_jx_j) - f_q(1)\right)$. By construction $\sum_{j=1}^n a_jx_j=x'_n$, therefore by \eqref{eigenvalues}, $\sigma_n = -2\left(\lambda(c_0 + c_1x_nx'_n) - f_q(1)\right)$. The claim follows by \eqref{eq:cond_signes}.
\end{proof}
\begin{lemma}
Let $\xi=(x_1,\ldots,x_n)$ be a vertex equilibrium in a simple consecutive cycle with connectivity matrix $J$ and $\xi'=J\xi=(x'_1,\ldots,x'_n)$. Suppose that $\xi$ possesses a 2-dimensional unstable manifold along directions $x_k$ and $x_l$, which by construction implies $x'_k=-x_k$ and $x'_l=-x_l$. Then a heteroclinic saddle-sink connection $\xi\rightarrow \xi'$ exists in the face of coordinates $(x_k,x_l)$ if and only if $l\neq k\pm 1$. Moreover in this case the diagonal segment joining $\xi$ to $\xi'$ is flow-invariant.   
\end{lemma}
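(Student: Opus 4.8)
\emph{The plan is to} reduce \eqref{eq-consecutive} to the two--dimensional \emph{face system} obtained by freezing every coordinate $x_j$, $j\neq k,l$, at its value in $\xi$, and then to analyze the resulting planar flow on the square $(x_k,x_l)\in[-1,1]^2$. Since every face of the hypercube is flow--invariant, the four sides of this square are invariant and its corners are $\xi$, the opposite corner $\xi'=J\xi$ (both $x_k,x_l$ flipped), and the two corners obtained by flipping a single coordinate. Write $s:=x_k$, $t:=x_l\in\{\pm1\}$ for the flipped coordinates of $\xi$. I use repeatedly that $\sigma_k(\xi)>0$ forces a sign switch at $k$, i.e. $x_{k+1}=-s$, and (when $l<n$) likewise $x_{l+1}=-t$. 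Because $\xi$ is a source for the face system, a saddle--sink connection $\xi\to\xi'$ lying in the face requires $\xi'$ to be a sink of the face system.

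First I would settle the \emph{only if} direction from the eigenvalue formulas \eqref{eigenvalues} evaluated at $\xi'$. For $k<n$ the sign of $\sigma_k(\xi')$ is the sign of $-x'_kx'_{k+1}$. If $l=k+1$ then both entries $x_k,x_{k+1}$ are flipped in passing to $\xi'$, so $x'_kx'_{k+1}=x_kx_{k+1}<0$: the switch at $k$ that produced $\sigma_k(\xi)>0$ is preserved, whence $\sigma_k(\xi')>0$. Thus $\xi'$ carries an unstable direction inside the face and cannot be the sink of a connection contained in it; this is exactly the obstruction, and the argument is purely combinatorial (it also covers $l=k+1=n$, using position $k=n-1$). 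If instead $l\neq k\pm1$, then $x_{k+1}$ (and, for $l<n$, $x_{l+1}$) is \emph{not} flipped, so $x'_kx'_{k+1}=-x_kx_{k+1}>0$ and similarly at $l$, giving $\sigma_k(\xi'),\sigma_l(\xi')<0$; hence $\xi'$ is a sink of the face system.

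For the \emph{if} direction, assume $l\neq k+1$ and first take $k<l<n$. The couplings entering the $k$--th and $l$--th equations of \eqref{eq-consecutive} are then the frozen nearest neighbours $x_{k+1}=-s$ and $x_{l+1}=-t$, so the face system decouples into the two scalar equations $\dot x_k=(1-x_k^2)\big(\lambda c_0x_k-\lambda c_1 s-f_q(x_k)\big)$ and $\dot x_l=(1-x_l^2)\big(\lambda c_0x_l-\lambda c_1 t-f_q(x_l)\big)$. Parametrising the diagonal by $x_k=su$, $x_l=tu$, $u\in[-1,1]$, and using that $f_q$ is odd, both right--hand sides collapse to $s\,\varphi(u)$ and $t\,\varphi(u)$ with $\varphi(u)=(1-u^2)\big(\lambda c_0u-\lambda c_1-f_q(u)\big)$; hence $\dot x_k/s=\dot x_l/t=\varphi(u)$ and the diagonal is flow--invariant. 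But $\dot u=\varphi(u)$ is exactly the scalar equation governing the edge connections in Theorem \ref{thm:existence edge cycle}, so the same elementary check shows that \eqref{eq:cond_signes} excludes equilibria in $(-1,1)$ and makes $u$ decrease monotonically from $\xi$ ($u=1$) to $\xi'$ ($u=-1$). This is the desired connection, and its invariance gives the final assertion.

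The remaining and genuinely delicate case is $l=n$, and I expect it to be \emph{the main obstacle}. Here the coupling into the last neuron is $\lambda c_1\sum_j a_jx_j$, which involves both varying coordinates $x_k$ and $x_n$, so the face system no longer decouples and the sink property $\sigma_n(\xi')<0$ is no longer automatic from the nearest--neighbour bookkeeping above. My plan is to show that the cross terms $a_kx_k+a_nx_n$ reduce to a \emph{constant} along the candidate diagonal $x_k=su$, $x_n=tu$ --- precisely as in Example \ref{ex:non-edge1}, where $a_1x_1+a_3x_3$ vanishes on $x_3=-x_1$ --- so that the $n$--th equation again reduces to $\dot u=\varphi(u)$ and diagonal invariance is restored. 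Deducing this collapse from the admissibility identity $J\xi=\xi'$ (which forces $\sum_j a_jx_j=-x_n$) together with the frozen values of the other coordinates is the technical crux; once it is established, the connection and its stability as a sink follow exactly as in the generic case, while the \emph{only if} direction needs no modification since the sign--switch computation at $\xi'$ never used decoupling.
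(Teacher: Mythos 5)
Your proposal follows essentially the same route as the paper's proof: reduce to the flow-invariant two-dimensional face, observe that the two face equations coincide on the anti-diagonal because $x_kx_l=-1$ (hence $x'_kx'_l=-1$) and $f_q$ is odd, so the segment joining $\xi$ to $\xi'$ is invariant and carries a saddle--sink connection by the same scalar analysis as in the edge case; and exclude $l=k+1$ by noting that the sign switch at position $k$ survives in $\xi'$, so $\sigma'_k>0$ and $\xi'$ cannot be a sink of the face system. The one point where you go beyond the paper is the case $l=n$: the paper's proof writes the second face equation with the nearest-neighbour coupling $\lambda c_1x'_l$, which is only literally valid for $l<n$ (the surrounding discussion assumes $k<l<n$), and does not comment on the last row of $J$; your observation that for $l=n$ one must verify that $\sum_j a_jx_j$ collapses to a constant along the diagonal, as it does in Example \ref{ex:non-edge1}, is a genuine refinement that the paper leaves implicit --- though you state it as a plan rather than carrying it out, so that case remains open in your write-up exactly as it is elided in the paper's.
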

\begin{proof}
The face $F_{kl}=\{(x_1,\ldots,x_{k-1},u,x_{k+1},\ldots,x_{l-1},v,x_{l+1}\ldots,x_n)~,~-1\leq u,v\leq 1\}$ is flow-invariant.
Suppose first that $k+1<l$ (the case $l<k+1$ is of course similar). Then equations in $F$ are
 \begin{align}\label{eq-nonedge_n=3}
 \begin{split}
 \dot u&=(1-u^2)\left(\lambda c_0u + \lambda c_1 x'_k - f_q(u)\right) \\
 \dot v&=(1-v^2)\left(\lambda c_0v + \lambda c_1x'_l - f_q(v)\right)
 \end{split}
 \end{align}
 Set $v=-u$. Then the two above equations are identical because by Lemma \ref{lemma:condition_2Dunstable} we have $x_kx_l=-1$, which also implies $x'_kx'_l=-1$. The saddle-sink connection along this segment follows from the same analysis as in the "edge" case. \\
Suppose now that $l=k+1$. Then in $\xi'$ the coordinates of indices $k$, $k+1$ have opposite signs, which implies that the eigenvalue $\sigma'_k$ of $\xi'$ is positive. Therefore $\xi'$ is a saddle or a source in the face joining $\xi$ to $\xi'$, which proves that no saddle-sink connection $\xi\rightarrow\xi'$ can exist. 
\end{proof}
This lemma can be generalized in a straightforward way to more that two unstable eigenvalues. 
\begin{Definition}
Let $\Sigma=(\xi_1,\ldots,\xi_p)$ be a simple, admissible consecutive cycle. $\Sigma$ has {\em adjacent switches} if in one column (at least), the sign of the entries change two or more times consecutively. 
\end{Definition}
The following theorem summarizes the previous results. 
\begin{Theorem}\label{thm1}
Let hypothesis (H) hold.
For the admissible simple consecutive cycle $\Sigma=(\xi_1,\ldots,\xi_p)$ with Hopfield equations \eqref{eq-consecutive}, the equilibria $\xi_1,\ldots,\xi_p$ are connected by a robust heteroclinic cycle if and only if: (i) condition \eqref{eq:cond_signes} is satisfied; (ii) $\Sigma$ has no adjacent switches. The heteroclinic connections $\xi_i\rightarrow\xi_{i+1}$ lie either along the corresponding edge of the unit cube in phase space, or inside the corresponding face, the dimension of which is equal to the number $q$ of switches of sign of coordinates from $\xi_i$ to $\xi_{i+1}$. In the latter case these connections form a $q$-dimensional manifold, and in this manifold one of them is the diagonal segment joining $\xi_i$ to $\xi_{i+1}$.
\end{Theorem}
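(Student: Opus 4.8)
The plan is to prove the theorem as a synthesis of the four preceding lemmas, analyzing the cycle one connection at a time. Recall that for a consecutive cycle the successor of each equilibrium is $\xi_{i+1}=J\xi_i$, so the coordinates of $\xi_i$ that change sign in passing to $\xi_{i+1}$ are exactly the indices $k$ with $(\xi_{i+1})_k=-(\xi_i)_k$. First I would show, using \eqref{eigenvalues} together with the lemma characterizing $\sigma_n$, that under \eqref{eq:cond_signes} these ``flipping'' coordinates coincide with the unstable directions of $\xi_i$: for $k<n$ the coordinate flips precisely when $x_kx_{k+1}<0$, which by \eqref{eq:cond_signes} is exactly when $\sigma_k>0$, and for $k=n$ it flips precisely when $x_nx'_n<0$, i.e.\ when $\sigma_n>0$. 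Hence the number $q$ of sign switches from $\xi_i$ to $\xi_{i+1}$ equals the dimension of the unstable manifold $W^u(\xi_i)$, and by Lemma \ref{lemma:condition_unstableeigenvalues} this manifold lies in the $q$-dimensional face $F_i$ spanned by the flipping coordinates.

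For sufficiency I would fix a pair $\xi_i\to\xi_{i+1}$ and work inside the flow-invariant face $F_i$. Condition \eqref{eq:cond_signes} makes $\xi_i$ a saddle whose unstable manifold fills the unstable directions of $F_i$. The only obstruction to $\xi_{i+1}$ being a sink in $F_i$ is identified by the saddle--sink lemma: if two flipping indices are adjacent ($l=k+1$) then at $\xi_{i+1}$ the in-face eigenvalue $\sigma'_k$ turns positive, so $\xi_{i+1}$ is not a sink and no connection exists. The hypothesis that $\Sigma$ has no adjacent switches says exactly that no equilibrium of the cycle has two consecutive flipping coordinates; written through $\xi_i=(\eta_i,\dots,\eta_{i+n-1})$ this is the same as $\eta$ having no two consecutive sign changes. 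Under this hypothesis every flipping pair is non-adjacent, $\xi_{i+1}$ is a sink in $F_i$, and the generalized saddle--sink lemma furnishes the connection, with the diagonal segment from $\xi_i$ to $\xi_{i+1}$ flow-invariant and carrying one such orbit, the full set of connecting orbits being the $q$-dimensional $W^u(\xi_i)$. Since each $F_i$ is flow-invariant (the factors $1-x_j^2$ pin the faces of the cube), every connection is robust, and they chain into a closed cycle by the cyclic structure of the consecutive cycle $\Sigma$.

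For necessity I would argue contrapositively on each condition. If \eqref{eq:cond_signes} fails, then by \eqref{eigenvalues} the eigenvalues $\sigma_k$ with $k<n$ all share one sign, so for a coordinate $k<n$ that must flip along a putative connection $\xi_i\to\xi_{i+1}$ one cannot have simultaneously $\sigma_k>0$ at $\xi_i$ and $\sigma'_k<0$ at $\xi_{i+1}$; hence that saddle--sink connection, and with it the whole cycle, cannot exist. If instead $\Sigma$ has an adjacent switch, some equilibrium has two consecutive flipping coordinates $k,k+1$, and the saddle--sink lemma shows the successor acquires a positive in-face eigenvalue $\sigma'_k$, so it is not a sink and the connection $\xi_i\to\xi_{i+1}$ breaks; since a heteroclinic cycle needs all its connections at once, none exists.

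The step I expect to be the main obstacle is the generalization of the two-dimensional saddle--sink lemma to a face of dimension $q>2$, and in particular the flow-invariance of the straight diagonal when one of the flipping coordinates is the last one, $k=n$, whose equation couples to all coordinates through $\sum_j a_jx_j$ rather than to a single neighbor. Parametrizing the diagonal by $u_k=x_k s$, the coupling term of the $n$-th equation becomes an affine function of $s$, and invariance requires its $s$-dependent part to vanish; a short computation using the recurrence $\sum_j a_j\eta_{m+j-1}=\eta_{m+n}$ shows this slope is proportional to $\eta_{i+n}-\eta_{i+n+1}$, hence it vanishes exactly when $\eta$ has no sign change at position $i+n$. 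But such a change, together with the change at $i+n-1$ that forces coordinate $n$ to flip in the first place, would be two consecutive switches in $\eta$, i.e.\ an adjacent switch. Thus the no-adjacent-switches hypothesis is precisely what simultaneously guarantees the sink property of $\xi_{i+1}$ and the invariance of the diagonal, including this last-coordinate case; checking this equivalence carefully, and the pairwise non-adjacency needed when $q>2$, is the crux of the argument.
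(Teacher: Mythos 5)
Your proposal is correct and follows the same route as the paper: the theorem is obtained by assembling the preceding lemmas (flipping coordinates $=$ unstable directions under \eqref{eq:cond_signes}, the sign criterion for $\sigma_n$, and the saddle--sink lemma for connections inside a face), and the paper itself offers no further argument beyond the statement that the theorem ``summarizes the previous results.'' Where you go beyond the paper is in the point you flag as the crux: the paper's saddle--sink lemma is proved only in the configuration where both moving coordinates are coupled to \emph{fixed} neighbours, so the face equations have constant coupling terms; when the $n$-th coordinate is one of the flipping directions its coupling $\sum_j a_j x_j$ involves the moving coordinates, and the diagonal's invariance is not immediate. Your computation is right: writing the moving coordinates as $(\xi_i)_k s$ and using the recurrence $\sum_j a_j\eta_{m+j-1}=\eta_{m+n}$, the $s$-dependent part of the $n$-th coupling equals $\tfrac12(\eta_{i+n}-\eta_{i+n+1})$, and its nonvanishing together with the flip of coordinate $n$ would force two consecutive sign changes of $\eta$, i.e.\ an adjacent switch. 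So the no-adjacent-switches hypothesis does double duty exactly as you say, and your version supplies a verification that the paper leaves implicit. The rest (robustness from invariance of the faces via the factors $1-x_j^2$, and the contrapositive for necessity) matches the paper's intent.
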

The example 1 above illustrates this theorem for a network of three neurons. The first column in \eqref{eq-seq4} contains one switch of sign but the second column contains 2 non adjacent switches. The resulting dynamics close to the heteroclinic cycle is shown in Fig. \ref{fig:non edge_hetcyc_n=3}. \\
The next example also concerns a network with three neurons, however it is a counter-example to existence of a heteroclinic cycle. 

\begin{example} \label{ex:non-edge2}
Let's take $\Sigma$ in the following form:								
\begin{equation}\label{Sigma33}
\Sigma=\left (\begin{array}{rrrr}-1&1&1\\1&1&-1\\1&-1&1\end{array}\right ).
\end{equation}
This matrix defines a simple minimal consecutive cycle: it is $3\times 3$ and invertible. Since $P^3=Id$, it is easy to find that
\begin{equation}\label{eq:J33}
 J=\left (\begin{array}{rrrrr} 0&1&0\\0&0&1\\1&0&0\end{array}\right ).
\end{equation}
Observe that in $\Sigma$ the third row has two adjacent switches of sign. The numerical simulation shows a dynamics converging to the equilibrium $(1,1,1)$ (Fig. \ref{fig:non_hetcyc_n=3}).
\begin{figure}[h]
\center\includegraphics[scale=0.65]{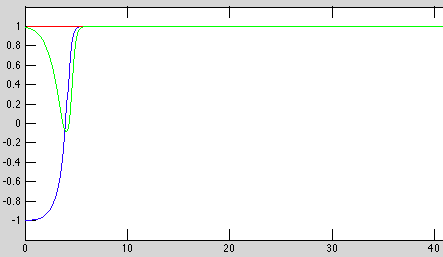}
\caption{Time series for the network with connectivity matrix \eqref{eq:J33}, $c_0=0.6$, $\lambda=2$, initial condition close to the first equilibrium $(-1,1,1)$. Colour code: blue$=x_1$, red$=x_2$, green$=x_3$.}
\label{fig:non_hetcyc_n=3}
\end{figure}
\end{example}
We can't rule out the possibility that condition (ii) in Theorem \ref{thm1} is not satisfied, but the network still possesses a heteroclinic cycle. However in this case the cycle will be different from the one defined by $\Sigma$. Two different $\Sigma$'s can give the same connectivity matrix. Next is an example of this kind.
\medskip

\begin{example} \label{ex:non-edge3}
This example shows that the assumption of Theorem \ref{thm1} concerning the absence of adjacent switches is essential.
Lets consider the following minimal consecutive cycle, which was introduced in \cite{chuanetal} (Example 6).
\begin{equation}\label{tildeSigma56}
\tilde\Sigma=\left (\begin{array}{rrrrrr}1&1&-1&1&-1&-1\\1&-1&1&-1&-1&1\\-1&1&-1&-1&1&1\\1&-1&-1&1&1&-1\\-1&-1&1&1&-1&1\end{array}\right ).
\end{equation}
By Example \ref{ex-xm1} of Section 3.2  the connectivity matrix is
\begin{equation}\label{eq:J56}
J=\left (\begin{array}{rrrrrr}0&1&0&0&0\\0&0&1&0&0\\0&0&0&1&0\\0&0&0&0&1\\-1&-1&-1&-1&-1\end{array}\right ).
\end{equation}
Observe that $\tilde\Sigma$ possesses two adjacent sign switches in the first column. The simulation of the dynamics in this case shows a heteroclinic cycle, however not the one which would correspond to the cycle formed by the columns of $\tilde\Sigma$ (Fig. \eqref{fig:hetcycn=5p=6}), which is consistent with Theorem \ref{thm1}.
\begin{figure}[h]
\center\includegraphics[scale=0.6]{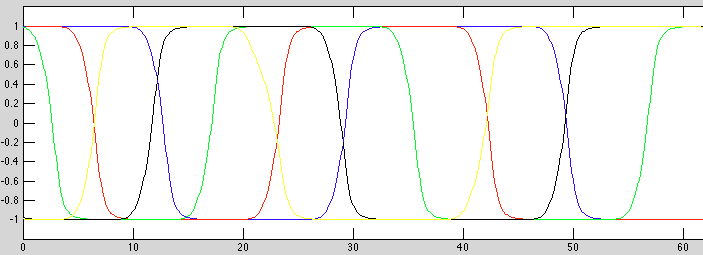}
\caption{Time series (after transients) for the network with connectivity matrix \eqref{eq:J56}, $c_0=0.6$, $\lambda=2.5$, random initial conditions. Colour code: blue$=x_1$, red$=x_2$, green$=x_3$, black$=x_4$, yellow$=x_5$.}
\label{fig:hetcycn=5p=6}
\end{figure}
In the figure we see that $x_3$ moves first from $+1$ to $-1$ while $x_1=x_2=+1$ and $x_4=x_5=-1$, then $x_1$ moves from $+1$ to $-1$ and simultaneously $x_5$ moves from $-1$ to $+1$, then $x_2$ and $x_4$ do the same, and the process repeats itself. The corresponding cycle is given by the following matrix (see also Section \ref{subsec:antisymm cycles}):
\begin{equation}\label{Sigma56}
\Sigma=\left (\begin{array}{rrrrrr}1&1&1&-1&-1&-1\\1&1&-1&-1&-1&1\\1&-1&-1&-1&1&1\\-1&-1&-1&1&1&1\\-1&-1&1&1&1&-1\end{array}\right ).
\end{equation}
\end{example}

In the following example the rank of $\Sigma$ is not maximal. \\
\begin{example} \label{ex:non-edge4}
Let's take
\begin{equation}\label{Sigma46}
\Sigma=\left (\begin{array}{rrrrrr}1&1&-1&-1&-1&1\\1&-1&-1&-1&1&1\\-1&-1&-1&1&1&1\\-1&-1&1&1&1&-1\end{array}\right ).
\end{equation}
Observe that the last row is opposite to the first one, which we call $\eta$, and $rank(\Sigma)=3$. The cycle is admissible and since $\eta P^4=-\eta P$, we have that
\begin{equation}\label{eq:J46}
J=\left (\begin{array}{rrrrrr}0&1&0&0\\0&0&1&0\\0&0&0&1\\0&-1&0&0
\end{array}\right ).
\end{equation}
A numerical simulation is shown in Fig. \ref{fig:hetcyc46}.
\begin{figure}[h]
\center\includegraphics[scale=0.6]{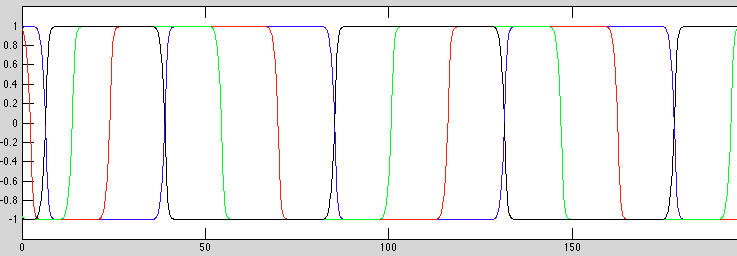}
\caption{Time series (after transients) for the network with connectivity matrix \eqref{eq:J46}, $c_0=0.6$, $\lambda=2.5$, random initial conditions. Colour code: blue$=x_1$, red$=x_2$, green$=x_3$, black$=x_4$.}
\label{fig:hetcyc46}
\end{figure}
\end{example}

\subsection{Some classes of simple consecutive cycles with non-edge heteroclinic cycles}
\subsubsection{Simple consecutive cycles with $p=n$}
Therefore $\Sigma$ is a square matrix. By construction, if $\eta$ denotes the first row of $\Sigma$, then $\Sigma=\left(\eta^T,(\eta P)^T,\ldots,(\eta P^{n-1})^T\right)$. Then it follows from Theorem 2 in \cite{chuanetal} that the cycle is admissible. Moreover $\eta P^n=\eta$, which implies that in \eqref{eq-consecutive}, $a_1=1$ and $a_j=0$ for $j>1$. Hence 
\begin{equation}\label{eq:p=n}
J=\left (\begin{array}{rrrrrr}0&1&0&\ldots&0\\0&0&1&\ldots&0\\\vdots&\ldots&\ldots&\ddots&\vdots\\0&\ldots&\ldots&0&1\\1&0&\ldots&\ldots&0
\end{array}\right ).
\end{equation}
Observe that for $n=3$ adjacent switches always exist in this case (see Example 2).
However for all square simple consecutive cycles of a given dimension $n>3$ and such that no adjacent switches occurs, we can conclude that several non-edge heteroclinic cycles can coexist and their number increases with $n$.
\medskip

\begin{example} \label{ex:non-edge5} 
$n=4$. The only square consecutive cycle with non adjacent switches is generated by $\eta=(1,1,-1,-1)$. Time series of the heteroclinic cycle shown in Fig.\ref{fig:hetcyc44}.
\begin{figure}[h]
\center\includegraphics[scale=0.6]{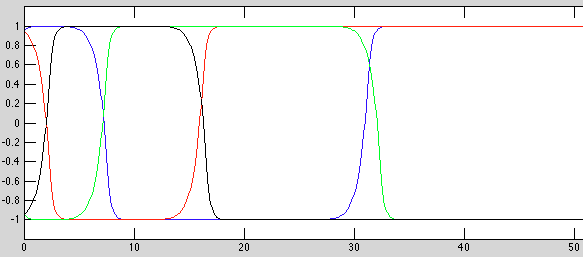}
\caption{Time series (after transients) for the consecutive cycle with $p=n=4$, $c_0=0.6$, $\lambda=3.4$, random initial conditions. Colour code: blue$=x_1$, red$=x_2$, green$=x_3$, black$=x_4$.}
\label{fig:hetcyc44}
\end{figure}
\end{example}

\begin{example} \label{ex:non-edge6} $n=6$. Then the following square consecutive cycles have no adjacent switches: $\eta_1=(1,1,1,-1,-1,-1)$ and $\eta_2=(1,1,1,1,-1,-1)$. The first heteroclinic cycle has connections on three different faces while the second cycle has connections on six different faces.
\end{example}

\subsubsection{Antisymmetric simple consecutive cycles with $p$ even and $n=p-1$} \label{subsec:antisymm cycles}
An {\em antisymmetric cycle}  is generated by a row vector $\eta=(v,-v)$, so that $p$ is even and $v$ has $p/2$ entries. In this case the conditions of Example \ref{ex-xm1} of Section \ref{sec-clahop} hold and 
\begin{equation}\label{eq:antisym}
J=\left (\begin{array}{rrrrrr}0&1&0&\ldots&0\\0&0&1&\ldots&0\\\vdots&\ldots&\ldots&\ddots&\vdots\\0&\ldots&\ldots&0&1\\-1&-1&\ldots&\ldots&-1
\end{array}\right ).
\end{equation}
Moreover by construction $\Sigma$ does not contain adjacent switches. Hence a heteroclinic cycle exists for this matrix.
Two such examples have already been discussed:
with $p=4$ and $n=3$ (Example \ref{ex:non-edge1}), and with $p=6$ and $n=5$ (Example \ref{ex:non-edge3})  (see Fig. \eqref{fig:non edge_hetcyc_n=3} and \eqref{fig:hetcycn=5p=6}, respectively). 
\subsubsection{Simple consecutive cycles with $n<p$ odd given by Propositions \ref{prop-cond1} and \ref{prop-cond2}}
Example \ref{ex-prop2} shows the construction of $J$ for $\eta$ satisfying \eqref{eq-cond1} with $p=15$ and $n=13$. We leave the obvious generalization
of this construction to the reader. Figure \ref{fig:hetcycm151} shows a heteroclinic cycle obtained for $\eta=(1,1,1,1,1,1,1,-1,-1,-1,1,1,1,1,1)$,
$p=15$ and $n=13$. 
\begin{figure}[h]
\includegraphics[scale=0.45]{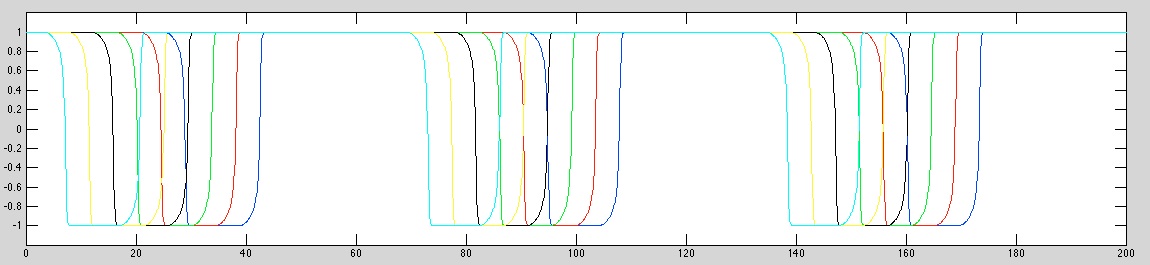}\\\\
\includegraphics[scale=0.45]{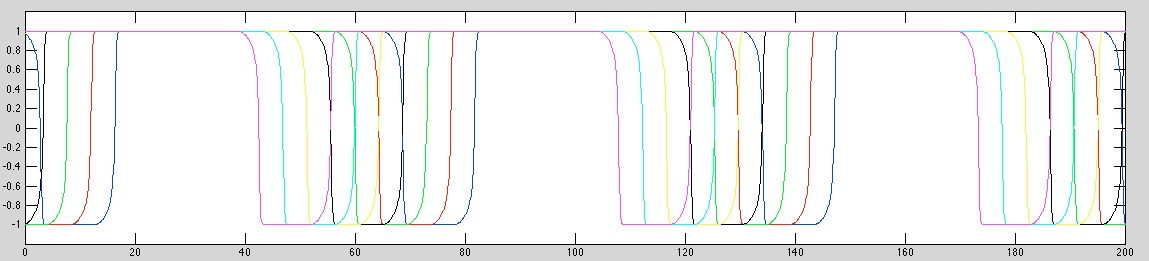}
\caption{Time series (after transients) for the consecutive cycle with $p=15$, $n=13$, $c_0=0.6$, $\lambda=3.4$, random initial conditions. Top picture = time series of $x_1,\dots,x_6$, bottom picture = time series of $x_7,\dots,x_{13}$.
Color code: blue$=x_1,x_7$, red$=x_2,x_8$, green$=x_3,x_9$, black$=x_4,x_{10}$, yellow$=x_5,x_{11}$, cyan$=x_6,x_{12}$, magenta$=x_{13}$.}
\label{fig:hetcycm151}
\end{figure}

For $\eta\in {\rm ker} (P^n-I)$, as stipulated by Proposition \ref{prop-cond2} and Remark \ref{rem-oneblockspace} the heteroclinic cycles
which arise are the same as for $\eta$ truncated to a single block. 
Multiple blocks simply correspond to repeated passages through the same heteroclinic cycle.
For example, if we use, as in Example \ref{ex-prop2} of Section \ref{sec-clahop},
\\$\eta=(1,1,1,-1,-1,1,1,1,-1,-1,1,1,1,-1,-1)$, with $n=5$,
we obtain a heteroclinic cycle in $\R^5$ corresponding to $\eta=(1,1,1,-1,-1)$. A triple passage through this cycle gives $\eta$ as stated above. 

 \section{Conclusion}\label{sec-disc}
 In this work we have studied robust heteroclinic cycles in Hopfield networks with coupling given by
 by the learning rule of \cite{personnazetal}. We gave an extensive classification of heteroclinic cycles
 for couplings of a simple consecutive type. In particular we established a tight relation between the structure of the coupling and the heteroclinic cycles supported by the resulting network. This work is a part of the general program of establishing connections between heteroclinic/homoclinic dynamics and neural processing (see \cite{rabinovichetal} for an outline of this program).
 
 An interesting direction for continuing this work is to determine if a correspondence between cycles in the coupling
 and robust heteroclinic cycles carries over to more realistic settings. As a first attempt of such a generalization we
 intend to introduce delays, as delays arise naturally in neural coupling and can play a functional role \cite{erm-term}.
 Other generalizations include considering systems with noise, more realistic models of neurons, or generalizations
 of the learning rule.\\[1ex]
 \appendix
 \section{Appendix on invariant spaces for linear actions}
 The material presented here is standard in algebra and is related to the rational or Frobenius normal form for matrices, see for example \cite{algebrabook}. 
 We have not found a reference that presents the necessary results in a concise fashion, thus we feel that there is a need for this appendix.
 
 Fix a linear transformation $T\colon\R^n\to\R^n$. We are interested in understanding the ($T$-)invariant subspaces 
of $\R^n$, that is, those subspaces $W$ with $T(W)\subseteq W$, and in particular in identifying the maximal proper
invariant subspaces. Here we will argue that
invariant subspaces are very closely linked to the action of the polynomial ring $\R[x]$ on the linear transformations on
$\R^n$ given by $(f,T)\mapsto f(T)$. Rather than studying specifically the permutation $P$ we consider a more general context
of cyclic transformations. A transformation  $T$ is cyclic if there exists a vector $v\in\R^n$ such that
\[
\R^n={\rm span}\{T^j v, j=0,1,\ldots\}.
\]
We will prove the following result.
\begin{Theorem}\label{th-charinspa}
Let $T\R^n\to\R^n$ be a cyclic transformation. There exists an polynomial $f_T$ of degree $n$ (the minimal polynomial) with the following property.
The invariant spaces for the action of $T$ on $\R^n$ 
are in one to one correspondence with non-trivial factors of the polynomial $f_T$. If $f$ is such a factor then $\ker(f(T))$
is the corresponding invariant space.
\end{Theorem}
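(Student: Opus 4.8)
The plan is to interpret everything through the module structure that $T$ induces on $\R^n$. Letting $\R[x]$ act on $\R^n$ by having $x$ act as $T$, that is $p(x)\cdot w = p(T)w$ for $p\in\R[x]$ and $w\in\R^n$, turns $\R^n$ into a module over the polynomial ring $\R[x]$. Under this action a subspace $W\subseteq\R^n$ is $T$-invariant if and only if it is an $\R[x]$-submodule, so the problem reduces to classifying submodules. The whole argument then rests on the fact that $\R[x]$ is a principal ideal domain.

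Next I would exploit cyclicity. Let $v$ be a cyclic vector, so that $\{T^j v : j\ge 0\}$ spans $\R^n$, and consider the evaluation homomorphism $\varepsilon\colon\R[x]\to\R^n$, $\varepsilon(p)=p(T)v$. This is a homomorphism of $\R[x]$-modules, and it is surjective precisely because $v$ is cyclic. Its kernel is an ideal of $\R[x]$, hence of the form $(f_T)$ for a unique monic polynomial $f_T$, and $\varepsilon$ descends to an isomorphism $\R[x]/(f_T)\cong\R^n$ of $\R[x]$-modules. Comparing $\R$-dimensions gives $\deg f_T = \dim_\R\R[x]/(f_T)=n$. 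The same polynomial $f_T$ is the minimal polynomial of $T$: since $f_T(T)v=0$ and $v$ is cyclic, $f_T(T)$ annihilates a spanning set and so vanishes identically, while any polynomial annihilating $T$ lies in $\ker\varepsilon=(f_T)$, so $f_T$ is the monic generator of the annihilator.

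With the isomorphism in hand I would classify submodules by the standard correspondence theorem: the submodules of $\R[x]/(f_T)$ are exactly the images $(g)/(f_T)$ of ideals $(g)\supseteq(f_T)$, and in the PID $\R[x]$ these are precisely the ideals with $g\mid f_T$. Thus submodules are in bijection with monic divisors $g$ of $f_T$. The final step is to match each divisor with a kernel. Writing $f_T=g\,h$ and transporting through the isomorphism $v\leftrightarrow 1$, the submodule $(g)/(f_T)=\{[p]:g\mid p\}$ is exactly the set of classes $[p]$ with $h(x)p(x)\equiv 0 \pmod{f_T}$; equivalently, setting $f=h=f_T/g$, it is the image of $\ker(f(T))$. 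Hence every non-trivial invariant subspace equals $\ker(f(T))$ for a non-trivial monic factor $f$ of $f_T$, and conversely, and the correspondence is one-to-one because the submodule correspondence is injective.

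The main obstacle I anticipate is the bookkeeping in this last matching: it is the complementary divisor $f=f_T/g$, not $g$ itself, that produces the kernel $\ker(f(T))$, and one must verify carefully that $(g)/(f_T)$ is the image of $\ker\!\big((f_T/g)(T)\big)$ under the isomorphism. One must also track the edge cases so that ``non-trivial factor'' matches ``proper nonzero invariant subspace'': a unit factor gives $f(T)$ invertible and hence $\ker(f(T))=\{0\}$, while the factor $f_T$ gives $f_T(T)=0$ and hence $\ker(f_T(T))=\R^n$. Everything else is a direct application of the PID structure of $\R[x]$ and the module correspondence theorem.
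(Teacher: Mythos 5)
Your proof is correct, but it takes a genuinely different route from the paper's. The paper works concretely with the polynomial action: it proves a chain of lemmas showing that kernels and images of $f(T)$ are invariant, that $\ker(f(T))=\ker(\gcd(f,f_T)(T))$, that $\dim [v]_T=\deg f_v$, and — the crucial and least obvious step — that \emph{every} invariant subspace $W$ of a cyclic transformation is itself cyclic, i.e.\ contains a $w$ with $f_w=f_W$ (Lemma~\ref{lem-cyc}); that last lemma requires a unique-factorization argument, extracting for each irreducible power $f^k$ dividing $f_W$ a vector annihilated exactly by $f^k$ and summing the pieces via the coprimality corollary. You bypass all of this by packaging the cyclicity hypothesis into the module isomorphism $\R^n\cong\R[x]/(f_T)$ induced by the evaluation map $p\mapsto p(T)v$, after which the classification of invariant subspaces is an immediate application of the correspondence theorem for submodules together with the fact that ideals of the PID $\R[x]$ containing $(f_T)$ are exactly the $(g)$ with $g\mid f_T$. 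Your bookkeeping of the complementary divisor is right: under the isomorphism and by surjectivity of $\varepsilon$, the submodule $(g)/(f_T)$ maps onto precisely $\ker\bigl((f_T/g)(T)\bigr)$, since $f_T\mid (f_T/g)\,p$ iff $g\mid p$ by cancellation in an integral domain. What your route buys is brevity and the elimination of the lcm/UFD lemma; what the paper's route buys is the extra explicit identities $\Im(g(T))=\ker(f(T))$ and the realization of each invariant subspace as the cyclic subspace $[g(T)v]_T$, which its final Theorem~\ref{th-charfinal} records and which make the minimal and maximal invariant subspaces easy to read off. Both arguments ultimately rest on the same fact, that $\R[x]$ is a principal ideal domain, and both establish $\deg f_T=n$ the same way, by comparing dimensions through the evaluation map.
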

It will be clear from the arguments below that $f_P=x^n-1$. Hence Theorem \ref{th-charinvsp} is a direct consequence of Theorem \ref{th-charinspa}.
\subsection*{Generalities on the correspondence between invariant subspaces and polynomials}
For each invariant subspace $W\subseteq\R^n$, we get a map
\[
\R[x]\to L(W,W), f\mapsto f(T)\upharpoonright W
\]
whose kernel is an ideal of $\R[x]$. Since $\R[x]$ is a Principal Ideal Domain (PID) it is a principal ideal. We denote 
the monic generator of this ideal by $f_W$. Note that $W=\ker(f_W(T))$. Note also that for $W=\R^n$, the polynomial $f_{\R^n}$ is the {\it minimal polynomial} 
of $T$, and we will denote it by $f_T$. A few observations in this setting will be 
useful.
\begin{lemma} If $f\in\R[x]$, then both $\ker(f(T))$ and $\Im(f(T))$ are invariant subspaces. 
\end{lemma}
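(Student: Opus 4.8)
The statement to prove is a standard fact: for any polynomial $f \in \R[x]$ and any linear transformation $T\colon \R^n \to \R^n$, both $\ker(f(T))$ and $\Im(f(T))$ are $T$-invariant subspaces.

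\medskip

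The plan is to exploit the single algebraic fact that drives everything here, namely that $T$ commutes with $f(T)$. This is immediate because $f(T)$ is a polynomial in $T$, so it is built from sums and products of powers of $T$, each of which commutes with $T$; hence $T \cdot f(T) = f(T) \cdot T$.

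\medskip

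First I would treat the kernel. Let $v \in \ker(f(T))$, so that $f(T)v = 0$. I want to show $Tv \in \ker(f(T))$, i.e. $f(T)(Tv) = 0$. Using commutativity, $f(T)(Tv) = T(f(T)v) = T(0) = 0$, which is exactly the desired conclusion. Since $\ker(f(T))$ is visibly a subspace (as the kernel of a linear map), this establishes $T$-invariance.

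\medskip

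Next I would treat the image. Let $w \in \Im(f(T))$, so $w = f(T)u$ for some $u \in \R^n$. I want $Tw \in \Im(f(T))$. Again by commutativity, $Tw = T(f(T)u) = f(T)(Tu)$, which exhibits $Tw$ as $f(T)$ applied to the vector $Tu$, hence $Tw \in \Im(f(T))$. Since the image of a linear map is a subspace, this completes the argument. There is no genuine obstacle in this lemma; the only thing to get right is the observation that polynomials in $T$ commute with $T$, and both conclusions follow by applying this commutation relation in the two directions. I would keep the write-up to these two short symmetric computations.
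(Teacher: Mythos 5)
Your proof is correct and follows essentially the same route as the paper: both arguments hinge on the commutation $T\circ f(T)=f(T)\circ T$ and then apply it in the two directions to handle the kernel and the image. Nothing further is needed.
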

\begin{proof}
This is a consequence of the fact that, for any $h\in\R[x]$, the linear transformations $h(T)$ and $f(T)$ 
commute so that: 
$w\in\ker(f(T))$ implies $f(T)(T(w))=T(f(T)(w))=T(0)=0$ and $w=f(T)(v)$ implies $T(w)=T(f(T)(v))=f(T)(T(v))\in\Im(f(T))$. 
\end{proof}
\begin{lemma} If $f\in\R[x]$ and $f'=\gcd(f,f_T)$, then 
\[
\ker(f(T))=\ker(f'(T))\quad\text{ and }\quad\Im(f(T))=\Im(f'(T)).
\]
\end{lemma}
\begin{proof}
To see this, first we can write $f'=gf+hf_T$ with $g,h\in\R[x]$ by the Euclidean Algorithm. It follows that 
$f'(T)=g(T)\circ f(T)=f(T)\circ g(T)$ since $f_T(T)=0$ by definition of the minimal polynomial. Also, since $f'$ divides $f$, 
there is $k\in\R[x]$ so that $f=kf'$ and thus $f(T)=k(T)\circ f'(T)=f'(T)\circ k(T)$. For the statement about images we have
\begin{align*}
f(T)=f'(T)\circ k(T)\quad\implies\quad \Im(f(T))\subseteq\Im(f'(T))\\
f'(T)=f(T)\circ g(T)\quad\implies\quad \Im(f'(T))\subseteq\Im(f(T)),
\end{align*}
and thus $f(T)$ and $f'(T)$ have equal images. The proof for kernels is similar.
\end{proof}
\begin{Corollary}\label{cor-dir} If $\gcd(f,g)=1$, then $\ker(f(T))\cap\ker(g(T))=\{0\}$.\end{Corollary}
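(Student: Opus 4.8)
The plan is to invoke B\'ezout's identity in the principal ideal domain $\R[x]$. Since $\gcd(f,g)=1$, there exist polynomials $a,b\in\R[x]$ with $af+bg=1$. Substituting $T$ for the indeterminate and using that the evaluation map $\R[x]\to L(\R^n,\R^n)$, $h\mapsto h(T)$, is a ring homomorphism (so that products of polynomials are carried to compositions of operators and sums to sums), this polynomial identity becomes the operator identity $a(T)\circ f(T)+b(T)\circ g(T)=\mathrm{Id}$. This is the one structural fact the argument rests on, and it has already been used implicitly in the preceding two lemmas.

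Next I would take an arbitrary vector $w\in\ker(f(T))\cap\ker(g(T))$ and apply the operator identity to it. Since $f(T)w=0$ and $g(T)w=0$, the left-hand side evaluates to $a(T)\bigl(f(T)w\bigr)+b(T)\bigl(g(T)w\bigr)=a(T)0+b(T)0=0$, while the right-hand side is simply $\mathrm{Id}(w)=w$. Therefore $w=0$, which is exactly the assertion that the intersection of the two kernels is trivial.

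I do not expect any genuine obstacle here: the statement is a direct consequence of coprimality via B\'ezout, and the result is essentially a one-line verification. The only point deserving a word of care is that $a(T)\circ f(T)$ applied to $w$ equals $a(T)\bigl(f(T)w\bigr)$, which is just associativity of operator composition, together with the fact that evaluation at $T$ respects multiplication of polynomials. Both were exploited in the earlier lemmas (the commuting of $h(T)$ and $f(T)$, and the factorization $f'(T)=g(T)\circ f(T)$), so they are available for free, and the corollary follows at once.
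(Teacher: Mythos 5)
Your argument is correct and is essentially identical to the paper's own proof: both write $1=af+bg$ via B\'ezout in $\R[x]$, evaluate at $T$, and apply the resulting operator identity to a vector in the intersection of the kernels to conclude it is zero. No differences worth noting.
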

\begin{proof} Suppose $v\in\ker(f(T))\cap\ker(g(T)$. We write $1=hf+kg$ and thus $$v=h(T)(f(T)(v))+k(T)(g(T)(v))=h(T)(0)+k(T)(0)=0$$\end{proof}
\subsection*{Cyclic subspaces}
For $v\in\R^n$ let
\[
[v]_T={\rm span}\;\{v,\, TV,\, T^2v,\, \ldots\}.
\] 
We refer to $[v]_T$ as the cyclic subspace generated by $v$. Clearly, every minimal subspace must be of this form.
We will prove in Lemma \ref{lem-cyc} that every invariant subspace has this form.

Now let $v\in\R^n$ and, again from the action of the polynomial ring, we obtain a map
\[
\R[x]\to\R^n, f\mapsto f(T)(v).
\]
Again, this map is linear, and its kernel is a principal ideal of $\R[x]$, the monic generator of which we will denote by $f_v$.
The image of this map is $[v]_T$.
\begin{lemma}\label{lem-quo} Let $v\in\R^n$, then $f_v=f_{[v]_T}$ and $\dim([v]_T)=\deg(f_v)$.\end{lemma}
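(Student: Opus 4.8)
The claim to prove is Lemma~\ref{lem-quo}: for $v\in\R^n$, the monic generator $f_v$ of the kernel of the evaluation map $\R[x]\to\R^n$, $f\mapsto f(T)(v)$, coincides with $f_{[v]_T}$, the generator of the ideal annihilating $T$ restricted to the cyclic subspace $[v]_T$, and moreover $\dim([v]_T)=\deg(f_v)$.

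\medskip

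The plan is to prove the two assertions in sequence, treating $\dim([v]_T)=\deg(f_v)$ first since it is the more computational of the two and the equality $f_v=f_{[v]_T}$ will then follow cleanly. First I would set $d=\deg(f_v)$ and argue that the vectors $v, Tv, \ldots, T^{d-1}v$ form a basis of $[v]_T$. Linear independence is the key point: if some nontrivial relation $\sum_{i=0}^{d-1}c_i T^i v=0$ held, then the polynomial $g(x)=\sum_{i=0}^{d-1}c_i x^i$ would be a nonzero element of $\ker(f\mapsto f(T)(v))$ of degree strictly less than $d$, contradicting that $f_v$ is the monic generator (hence the minimal-degree monic element) of that ideal. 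For the spanning direction, I would show by induction that every $T^k v$ with $k\ge d$ lies in $\mathrm{span}\{v,Tv,\ldots,T^{d-1}v\}$: writing $f_v(x)=x^d-\sum_{i=0}^{d-1}a_i x^i$, the relation $f_v(T)(v)=0$ gives $T^d v=\sum_{i=0}^{d-1}a_i T^i v$, and multiplying by powers of $T$ propagates this to all higher powers. Since $[v]_T=\mathrm{span}\{T^j v : j\ge 0\}$ by definition, this establishes that the first $d$ powers span, so $\dim([v]_T)=d=\deg(f_v)$.

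\medskip

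For the equality $f_v=f_{[v]_T}$, I would use the characterizations of the two polynomials as generators of ideals. Recall $f_{[v]_T}$ generates the kernel of $\R[x]\to L([v]_T,[v]_T)$, $f\mapsto f(T)\upharpoonright [v]_T$, while $f_v$ generates the kernel of evaluation at $v$. For the inclusion $f_{[v]_T}\mid f_v$: a polynomial $f$ kills $T$ on all of $[v]_T$ only if it kills the generator $v$, so $\ker(f\mapsto f(T)\upharpoonright [v]_T)\subseteq\ker(f\mapsto f(T)(v))$, giving $f_v\mid f_{[v]_T}$; conversely, if $f(T)(v)=0$ then, since $[v]_T$ is spanned by the $T^j v$ and $f(T)$ commutes with $T$, we get $f(T)(T^j v)=T^j(f(T)(v))=0$ for all $j$, so $f(T)\upharpoonright[v]_T=0$, yielding $f_{[v]_T}\mid f_v$. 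Both polynomials being monic, the two divisibilities force $f_v=f_{[v]_T}$.

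\medskip

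I do not anticipate a serious obstacle here; the main care needed is keeping the two ideals and their generators clearly distinguished, and making sure the spanning argument correctly reduces the infinite generating set $\{T^j v\}_{j\ge 0}$ to the finite basis. The one step that genuinely carries the content is the linear independence of $v,Tv,\ldots,T^{d-1}v$, which is exactly the place where the minimality of $\deg(f_v)$ enters; everything else is a formal manipulation of the commuting action of $\R[x]$ established in the preceding lemmas.
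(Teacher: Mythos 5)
Your proof is correct. For the identity $f_v=f_{[v]_T}$ you use essentially the same observation as the paper: since $f(T)$ commutes with $T$, a polynomial annihilating $v$ annihilates every $T^jv$ and hence all of $[v]_T$, while the reverse divisibility is immediate because $v\in[v]_T$; the paper states only the nontrivial direction and leaves the converse implicit. The dimension claim is where you genuinely diverge in presentation: the paper invokes the first isomorphism theorem to get $[v]_T\cong\R[x]/{\rm Idl}(f_v)$ and then quotes $\dim\R[x]/{\rm Idl}(f_v)=\deg(f_v)$, whereas you unpack that fact into an explicit elementary argument that $v,Tv,\ldots,T^{d-1}v$ is a basis --- independence from the minimality of $\deg(f_v)$, spanning from the recurrence $T^dv=\sum_{i<d}a_iT^iv$ propagated by induction. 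The two arguments are the same mathematics at different levels of abstraction: your version is self-contained and makes visible exactly where minimality of the generator is used, while the paper's is shorter and slots directly into the module-theoretic framework it sets up for the rest of the appendix. No gap in either.
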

\begin{proof}
The first statement follows as if $f(T)$ annihilates $v$ then it also annihilates $g(T)(v)$ for any $g\in\R[x]$. The second 
statement follows since, by the first isomorphism theorem, we have 
\[
[v]_T\cong\R[x]/{\rm Idl}(f_v)
\]
and the dimension of $\R[x]/{\rm Idl}(f_v)$ is equal to $\deg(f_v)$.\end{proof}
\begin{lemma} \label{lem-facto}. Let $v\in\R^n$ and $f,g\in\R[x]$ with $fg=f_v$. Then $f_w=f$ for $w=g(T)(v)$.\end{lemma}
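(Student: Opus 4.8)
The plan is to show the two divisibility relations $f_w \mid f$ and $f \mid f_w$ and then invoke that both polynomials are monic to conclude equality. Recall that $f_w$ is defined as the monic generator of the kernel of the evaluation map $\R[x]\to\R^n$, $h\mapsto h(T)(w)$, so $f_w$ is characterized by the property that $h(T)(w)=0$ if and only if $f_w$ divides $h$.

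First I would establish $f_w \mid f$. Since $w=g(T)(v)$ and $T$-polynomials commute, I compute
\[
f(T)(w)=f(T)\bigl(g(T)(v)\bigr)=(fg)(T)(v)=f_v(T)(v)=0,
\]
using the hypothesis $fg=f_v$ and the defining property of $f_v$. Thus $f$ annihilates $w$, and by the characterization of $f_w$ it follows that $f_w$ divides $f$.

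Next I would prove the reverse divisibility $f\mid f_w$, which is the heart of the argument. Let $h\in\R[x]$ be any polynomial with $h(T)(w)=0$. Then
\[
(hg)(T)(v)=h(T)\bigl(g(T)(v)\bigr)=h(T)(w)=0,
\]
so $hg$ lies in the kernel ideal of the evaluation map at $v$, whence $f_v=fg$ divides $hg$. The key step is now to cancel $g$: since $\R[x]$ is an integral domain, $g$ is not a zero divisor, so $fg\mid hg$ forces $f\mid h$. Applying this to the particular annihilator $h=f_w$ yields $f\mid f_w$.

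Combining $f_w\mid f$ with $f\mid f_w$, and using that $f$ and $f_w$ are both monic, I conclude $f=f_w$, as claimed. I expect the only point requiring care to be the cancellation $fg\mid hg\Rightarrow f\mid h$, which relies on $\R[x]$ being an integral domain rather than on any property of $T$; everything else is a direct manipulation of evaluation maps together with the principal-ideal description of the minimal annihilating polynomials.
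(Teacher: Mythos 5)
Your proof is correct and follows essentially the same route as the paper: both establish $f_w\mid f$ from $f(T)(w)=(fg)(T)(v)=0$, and both use the observation that $h(T)(w)=0$ implies $fg\mid hg$. The only cosmetic difference is in the last step, where you cancel $g$ using that $\R[x]$ is an integral domain (valid, since $g\neq 0$ because $fg=f_v\neq 0$), whereas the paper compares degrees of $fg$ and $hg$ for a monic divisor $h$ of $f$; both finish the argument correctly.
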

\begin{proof}
Since $f(T)(w)=f_v(v)=0$, it follows that $f_w$ divides $f$. 
We prove that if $h$ is a monic divisor of $f$ satisfying $h(w)=0$ then $f=h$.
Since $h(w)=0$ is equivalent to $hg(T)(v)=0$ it follows that $f_v=fg$ divides $hg$ and thus ${\rm deg}(f)\le {\rm deg}(h)$.
Since $h$ is monic it follows that $f=h$.
\end{proof}
In the proof of the next lemma, we need the fact that $\R[x]$ is a Unique Factorization Domain (UFD), which means that 
each non-zero polynomial $f\in\R[x]$ may be written as $f=af_1^{n_1}\ldots f_k^{n_k}$ where $a$ is a real number and each 
$f_i$ is an irreducible divisor of $f$ which is also prime (that is, for all $h,k\in\R[x]$, if $f_i$ divides $hk$ then $f_i$ divides $h$ 
or $f_i$ divides $k$). 
\begin{lemma}\label{lem-cyc} Let $W$ be an invariant subspace. Then there is $w\in W$ with $f_w=f_W$.\end{lemma}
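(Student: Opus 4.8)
The plan is to produce the desired $w$ by amalgamating cyclic generators, reducing first to a statement about a finite spanning set. I would choose any basis $v_1,\dots,v_m$ of $W$. Since the $v_i$ span $W$, a polynomial $f$ satisfies $f(T)\upharpoonright W=0$ if and only if $f(T)(v_i)=0$ for every $i$, i.e. if and only if $f_{v_i}\mid f$ for all $i$. Hence the ideal of polynomials annihilating $W$ is $\bigcap_i(f_{v_i})=(\mathrm{lcm}(f_{v_1},\dots,f_{v_m}))$, and comparing monic generators gives $f_W=\mathrm{lcm}(f_{v_1},\dots,f_{v_m})$. It therefore suffices to exhibit a single vector of $W$ whose annihilator polynomial is this least common multiple.

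The heart of the argument is a combination principle, which I would establish first: if $u,v\in W$ satisfy $\gcd(f_u,f_v)=1$, then $f_{u+v}=f_uf_v$. Write $a=f_u$, $b=f_v$ and $c=f_{u+v}$. One inclusion is immediate, since $f_uf_v$ annihilates $u+v$, whence $c\mid ab$. For the reverse I would invoke Corollary \ref{cor-dir}: from $c(T)(u+v)=0$ we obtain $c(T)(u)=-c(T)(v)$, and since $a(T)c(T)(u)=c(T)a(T)(u)=0$ and likewise $b(T)c(T)(v)=0$, this common vector lies in $\ker(a(T))\cap\ker(b(T))=\{0\}$. Thus $c(T)(u)=c(T)(v)=0$, so $a\mid c$ and $b\mid c$, and coprimality gives $ab\mid c$; both polynomials being monic, $c=ab$.

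Next I would upgrade this to an arbitrary pair by passing from $\gcd$ to $\mathrm{lcm}$. Given $u,v$ with $f_u=a$ and $f_v=b$, I would factor $\mathrm{lcm}(a,b)=a'b'$ with $a'\mid a$, $b'\mid b$ and $\gcd(a',b')=1$; such a coprime splitting exists because $\R[x]$ is a UFD, assigning to each irreducible power occurring in the lcm the factor ($a$ or $b$) realizing its full exponent. Setting $u'=(a/a')(T)(u)$ and $v'=(b/b')(T)(v)$, Lemma \ref{lem-facto} yields $f_{u'}=a'$ and $f_{v'}=b'$, and then the combination principle gives $f_{u'+v'}=a'b'=\mathrm{lcm}(a,b)$. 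Iterating over $v_1,\dots,v_m$ — combining the running amalgam with the next generator at each stage — produces a vector $w\in W$ (note $w\in W$ since $W$ is invariant) with $f_w=\mathrm{lcm}(f_{v_1},\dots,f_{v_m})=f_W$; equivalently, by Lemma \ref{lem-quo}, $\dim[w]_T=\deg f_W$.

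I expect the main obstacle to lie in the two ingredients of the $\mathrm{lcm}$ step: the coprime combination principle itself, and the clean splitting $\mathrm{lcm}(a,b)=a'b'$. For the latter one must check, working with the UFD structure of $\R[x]$ (where irreducibles may be quadratic), that assigning full prime-power multiplicities produces a factorization whose product is genuinely the least common multiple rather than merely some common multiple, and that $a',b'$ are indeed coprime so that Lemma \ref{lem-facto} and the combination principle both apply. Once these are in place, the remaining induction is routine divisibility bookkeeping in the principal ideal domain $\R[x]$.
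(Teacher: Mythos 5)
Your proof is correct and follows essentially the same route as the paper's: both characterize $f_W$ as a least common multiple, use Lemma \ref{lem-facto} to extract vectors whose annihilators are coprime prime-power factors, and then sum them, with Corollary \ref{cor-dir} guaranteeing the sum has the product as its annihilator. The only difference is organizational — the paper handles all irreducible divisors of $f_W$ at once while you merge pairwise by induction — and your explicit ``combination principle'' usefully spells out the coprime-sum step that the paper leaves to a bare citation of Corollary \ref{cor-dir}.
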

\begin{proof}
To see this, first note that $f_W={\rm lcm}(f_w\mid w\in W)$. (lcm denotes the least common multiple).
Thus, for any irreducible divisor $f$ of $f_W$ and for $k$ the largest power of $f$ that divides $f_W$, there must be a $w\in W$
so that $f^k$ divides $f_w$. Now taking $g=f_w/f^k$, we see by Lemma \ref{lem-facto} that $f_{w'}=f^k$ where $w'=g(T)(w)$. Doing this for
each irreducible divisor of $f_W$, the sum of the resulting $w'$s is the required element by Corollary \ref{cor-dir}, since $f_{w'}=f^k$ 
implies $w'\in\ker(f^k(T))$.\end{proof}
\subsection*{Cyclic transformations}
Recall that a linear transformation $T\colon\R^n\to \R^n$ is cyclic if there is a $v\in\R^n$ so that $[v]_T=\R^n$. We call $v$ a cyclic generator.
\begin{lemma}  If $T$ is cyclic and $fg=x^n-1$, then $\dim(\Im(g(T)))=deg(f)$.\end{lemma}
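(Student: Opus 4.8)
The plan is to reduce the image of $g(T)$ to a cyclic subspace and then read off its dimension from Lemma~\ref{lem-quo}. Fix a cyclic generator $v$, so that $[v]_T=\R^n$. By Lemma~\ref{lem-quo} the annihilator $f_v$ equals $f_{[v]_T}=f_T$, the minimal polynomial of $T$, and $\deg f_v=\dim[v]_T=n$; since $T$ is cyclic this minimal polynomial is exactly $x^n-1$, so the hypothesis $fg=x^n-1$ is precisely a factorization $fg=f_v$ into which the earlier lemmas can be fed.

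First I would show that $\Im(g(T))$ is itself a cyclic subspace, namely $\Im(g(T))=[w]_T$ with $w:=g(T)(v)$. The inclusion $[w]_T\subseteq\Im(g(T))$ is immediate, since $T^j w=T^j g(T)(v)=g(T)(T^j v)\in\Im(g(T))$, using that $T^j$ and $g(T)$ commute. Conversely, any $u\in\R^n$ can be written $u=h(T)(v)$ for some $h\in\R[x]$ because $v$ generates, whence $g(T)(u)=h(T)(g(T)(v))=h(T)(w)\in[w]_T$; this gives $\Im(g(T))\subseteq[w]_T$ and hence equality.

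With the factorization $fg=f_v$ in hand, Lemma~\ref{lem-facto} applies directly to $w=g(T)(v)$ and yields $f_w=f$. Finally, Lemma~\ref{lem-quo} gives $\dim[w]_T=\deg f_w=\deg f$, so that $\dim\Im(g(T))=\deg f$, as claimed. The one point requiring care is the identification $\Im(g(T))=[w]_T$, where both inclusions must be checked and the commutation of $g(T)$ with powers of $T$ invoked; confirming that the given polynomial $x^n-1$ really is the minimal polynomial $f_v$ of the cyclic generator, so that Lemma~\ref{lem-facto} is applied to the correct factorization, is the other place where the hypotheses are genuinely used.
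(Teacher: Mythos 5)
Your argument is correct and is essentially the paper's own proof: identify $\Im(g(T))$ with the cyclic subspace $[w]_T$ for $w=g(T)(v)$, apply Lemma~\ref{lem-facto} to the factorization of the minimal polynomial to get $f_w=f$, and read off $\dim[w]_T=\deg f$ from Lemma~\ref{lem-quo}. One small caveat: cyclicity alone only gives $\deg f_T=n$, not $f_T=x^n-1$, so your parenthetical justification of that identification is not a valid deduction; however, the equality $f_T=x^n-1$ does hold for the permutation $P$ to which the lemma is actually applied (and is implicitly assumed in the paper's own statement and proof), so this does not affect the substance of your argument.
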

\begin{proof} This follows from Lemmas \ref{lem-quo} and \ref{lem-facto}: Take $v\in\R^n$ with $[v]_T=\R^n$. Then $f_v=f_T$ is the minimal polynomial and 
thus $f=f_w$ for $w=g(T)(v)$ and $\dim([w]_T)=\deg(f)$. Finally note that $[w]_T={\rm span}(T^i(g(T)(v))=g(T)(T^i(v))\mid i\in\N)
=g(T)(\R^n)=\Im(g(T))$.\end{proof}
\begin{lemma}  If $T$ is cyclic and $fg=f_T$, then $\Im(g(T)))=\ker(f(T))$.\end{lemma}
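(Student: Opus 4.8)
The plan is to prove the two inclusions asymmetrically: one directly from the defining property of the minimal polynomial, and the other by a dimension count that reduces entirely to the immediately preceding lemma.

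First I would establish the inclusion $\Im(g(T))\subseteq\ker(f(T))$. Taking any $w\in\Im(g(T))$, write $w=g(T)(v)$ for some $v\in\R^n$. Then $f(T)(w)=f(T)(g(T)(v))=(fg)(T)(v)=f_T(T)(v)=0$, where I use $fg=f_T$ and $f_T(T)=0$ by definition of the minimal polynomial. Hence $w\in\ker(f(T))$, and the inclusion follows.

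Next I would show the two subspaces have equal dimension, so that the inclusion forces equality. The immediately preceding lemma gives $\dim(\Im(g(T)))=\deg(f)$ at once. For the kernel I would invoke rank--nullity, $\dim(\ker(f(T)))=n-\dim(\Im(f(T)))$, and then apply that same preceding lemma with the roles of $f$ and $g$ interchanged: since $gf=f_T=x^n-1$ as well, it yields $\dim(\Im(f(T)))=\deg(g)$. As $\deg(f)+\deg(g)=\deg(f_T)=n$, I obtain $\dim(\ker(f(T)))=n-\deg(g)=\deg(f)$.

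Combining these, $\dim(\ker(f(T)))=\deg(f)=\dim(\Im(g(T)))$, and together with the inclusion proved above this forces $\Im(g(T))=\ker(f(T))$. I do not expect any genuine obstacle here; the only point requiring care is the symmetric reuse of the preceding lemma for both $g(T)$ and $f(T)$, which is legitimate precisely because $fg=gf=f_T$ as polynomials, so the hypothesis of that lemma holds in either ordering.
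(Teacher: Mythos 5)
Your proof is correct and follows essentially the same route as the paper's: the inclusion $\Im(g(T))\subseteq\ker(f(T))$ from $f_T(T)=0$, followed by a dimension count that applies the preceding lemma to both $f$ and $g$ and invokes rank--nullity; the paper merely packages the count as a chain $n=\deg(f)+\deg(g)=\dim(\Im(g(T)))+\dim(\Im(f(T)))\leq\dim(\ker(f(T)))+\dim(\Im(f(T)))=n$ rather than computing $\dim(\ker(f(T)))$ explicitly. No gap.
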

\begin{proof}
For any $w$ with $w=g(T)(w')$ we have $f(T)(w)=f(T)(g(T)(w'))=0$, so $\Im(g(T))\subseteq\ker(f(T))$ and we have
\begin{align*}
n=\deg(f)+\deg(g)&=\dim(\Im(g(T)))+\dim(\Im(f(T)))\\
                            &\leq\dim(\ker(f(T)))+\dim(\Im(f(T)))=n.\\
\end{align*}
Thus $\Im(g(T)))=\ker(f(T))$.\end{proof}

The following result now follows.

\begin{Theorem}\label{th-charfinal}
If $T$ is a cyclic linear operator on $\R^n$ and $v\in\R^n$ is a cyclic generator, then the invariant subspaces
of $\R^n$ for $T$ are in one-to-one correspondence with the pairs $(f,g)$ such that $fg=f_T$. The space corresponding 
to such a pair $(f,g)$ is
\[
[g(T)(v)]=Im(g(T))=\ker(f(T)).
\]
In particular, the minimal invariant subspaces correspond to the pairs $(f,g)$ where $f$ is an irreducible factor of $f_T$
in $\R[x]$ and the maximal invariant subspaces correspond to the pairs $(f,g)$ where $g$ is an irreducible factor of $f_T$
in $\R[x]$ .
\end{Theorem}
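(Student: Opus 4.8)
The plan is to realize the asserted bijection by the explicit assignment $(f,g)\mapsto\ker(f(T))$ together with its candidate inverse $W\mapsto(f_W,\,f_T/f_W)$, verify these are mutually inverse, and then read off the minimal/maximal statements by translating inclusion of invariant subspaces into divisibility of the associated factors. First I would check that $(f,g)\mapsto\ker(f(T))$ is well defined and produces the three descriptions in the statement. For a monic factorization $fg=f_T$, the set $\ker(f(T))$ is an invariant subspace by the first lemma of the Generalities subsection, and the two lemmas immediately preceding the theorem give $\ker(f(T))=\Im(g(T))$; since $g(T)$ commutes with $T$ one has $\Im(g(T))=g(T)([v]_T)=[g(T)(v)]_T$ for a cyclic generator $v$, which supplies the remaining identification in the display.

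Next I would establish that the two maps are inverse to one another. For surjectivity, let $W$ be any invariant subspace with monic generator $f_W$, so $W=\ker(f_W(T))$; because $f_T(T)=0$ on all of $\R^n$, in particular $f_T(T)\!\upharpoonright\!W=0$, so $f_W$ divides $f_T$, and the factorization $(f_W,\,f_T/f_W)$ has associated subspace exactly $W$. For injectivity I would invoke the dimension lemma: whenever $fg=f_T$ one has $\dim\ker(f(T))=\dim\Im(g(T))=\deg f$; applied to $(f_W,\,f_T/f_W)$ this gives $\deg f_W=\dim W$ for the generator of every invariant subspace. Now if two factorizations $(f_1,g_1)$ and $(f_2,g_2)$ yield the same subspace $W$, then each $f_i$ annihilates $W$ and is therefore a multiple of $f_W$; since $\deg f_i=\dim W=\deg f_W$ and all three polynomials are monic, $f_1=f_2=f_W$, whence $g_1=g_2$. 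This completes the bijection.

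For the concluding assertions I would upgrade the bijection to an order isomorphism between invariant subspaces, ordered by inclusion, and monic divisors of $f_T$, ordered by divisibility. The easy direction is that $f_1\mid f_2$ forces $\ker(f_1(T))\subseteq\ker(f_2(T))$, since writing $f_2=f_1h$ gives $f_2(T)=h(T)f_1(T)$, which kills $\ker(f_1(T))$. For the converse I would first show that the minimal generator of $\ker(f(T))$ is $f$ itself: $f$ annihilates it, so $f_{\ker(f(T))}\mid f$, while $\deg f_{\ker(f(T))}=\dim\ker(f(T))=\deg f$ by the degree identity above, forcing equality. Then $\ker(f_1(T))\subseteq\ker(f_2(T))$ means $f_2$ annihilates $\ker(f_1(T))$, so $f_1=f_{\ker(f_1(T))}$ divides $f_2$. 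With inclusion matched exactly to divisibility, the minimal nonzero invariant subspaces are the images of the atoms of the divisor poset, namely the irreducible $f$, and the maximal proper invariant subspaces are the images of the coatoms below $f_T$, namely the factorizations with $g=f_T/f$ irreducible.

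I expect the main obstacle to be the converse half of the order isomorphism, i.e.\ verifying that $f$ is precisely the minimal polynomial of the restriction of $T$ to $\ker(f(T))$; once that degree-versus-dimension bookkeeping is nailed down via the dimension lemma, the remainder is a direct transport of the atom/coatom structure of the divisibility lattice through the two identification lemmas preceding the statement.
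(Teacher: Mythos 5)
Your proposal is correct and follows essentially the same route as the paper, which states the theorem as an immediate consequence of the preceding lemmas; you are simply making explicit the bijection $(f,g)\mapsto\ker(f(T))$ with inverse $W\mapsto(f_W,\,f_T/f_W)$, the dimension count $\dim\ker(f(T))=\deg f$, and the transport of the divisibility order, exactly as the appendix intends. The only step you take for granted, the equality $W=\ker(f_W(T))$ rather than mere inclusion, is likewise asserted without proof in the paper's ``Generalities'' subsection (it requires cyclicity, via Lemma~\ref{lem-cyc} and the dimension lemma), so your argument matches the paper's own level of detail.
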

Note that Theorem \ref{th-charfinal} implies Theorem \ref{th-charinspa}.

\section*{Acknowledgement}
This work was partially supported by
the European Union Seventh Framework Programme (FP7/2007-2013) under grant
agreement no. 269921 (BrainScaleS), no. 318723 (Mathemacs), and by the ERC
advanced grant NerVi no. 227747.

 \newpage
 
 \end{document}